%
%
%
%
\documentclass{amsart}
\usepackage{pb-diagram}
\usepackage{verbatim}
\usepackage[leqno]{amsmath}
\usepackage{amssymb,amsthm,pb-diagram,pb-xy}
\usepackage[cmtip, arrow]{xy}
\usepackage{amscd}
\usepackage[mathscr]{euscript}
\usepackage[numbers]{natbib}
\newtheorem{teor}{Theorem}[section]
\newtheorem{lema}[teor]{Lemma}

\theoremstyle{definition}
\newtheorem{defi}[teor]{Definition}
\newtheorem{prop}[teor]{Proposition}
\newtheorem{coro}[teor]{Corollary}
\newtheorem{example}[teor]{Example}

\newtheorem{fact}{Fact}

\theoremstyle{remark}
\newtheorem{nota}[teor]{Remark}

\numberwithin{equation}{section}



\newcommand{\C}{{\ensuremath{\mathbb{C}}}}
\newcommand{\N}{{\ensuremath{\mathbb{N}}}}
\newcommand{\Z}{{\ensuremath{\mathbb{Z}}}}
\newcommand{\Q}{{\ensuremath{\mathbb{Q}}}}

\newcommand{\F}{\mathbb{F}}

\def\D{D^{\omega }(G)}
\begin{document}

\title{Twisted K-theory for the Orbifold $[*/G]$}

\author{Mario Vel\'asquez}
\address{Department of Mathematics, Universidad de Los Andes, Bogot\'a, Colombia}
\curraddr{Universidad de Los Andes, Departamento de Matem\'aticas, Cra 1 No 18A- 12 edificio H, Bogot\'a, Colombia}
\email{ma.velasquez109@uniandes.edu.co}
\thanks{The first author was partially supported by COLCIENCIAS through the grant `Becas Generaci\'on del Bicentenario' \#494,  FUNDACI\'ON MAZDA PARA EL ARTE Y LA CIENCIA. and by Prof. Wolfgang L\"uck through his `Leibniz Prize'.}
\author{Edward Becerra}
\address{Department of Mathematics, Universidad Nacional de Colombia,
Bogot\'a, Colombia}
\email{esbecerrar@unal.edu.co}
\author{Hermes Martinez}
\address{Department of Mathematics, Universidad Sergio Arboleda, Bogot\'a, Colombia}
\curraddr{Universidad Sergio Arboleda, Escuela de Matem\'aticas, Calle 74 No. 14-14, Bogot\'a, Colombia}
\email{hermes.martinez@ima.usergioarboleda.edu.co}

\subjclass[2000]{19L47, 19L50 (primary), 20J06, 20C25 (secondary)}

\date{June 23, 2011 and, in revised form November 28, 2012}

\dedicatory{The second author wants to dedicate this paper to Heset Mar\'ia.}

\keywords{Inverse transgression map, twisted double Drinfeld, twisted K-theory}



\maketitle
\begin{abstract}
The main result of this paper is the Proposition (\ref{mainresult}). This result establish an explicit ring isomorphism between the twisted Orbifold K-theory ${^{\omega}}K_{orb}([*/G])$ and $R(\D)$ for any element $\omega\in Z^3(G;S^1)$. We also study the relation between the twisted Orbifold K-theories ${^{\alpha}}K_{orb}(\mathcal{X})$ and ${^{\alpha'}}K_{orb}(\mathcal{Y})$ of the Orbifolds $\mathcal{X}=[*/G]$ and $\mathcal{Y}=[*/G']$, where $G$ and $G'$ are different finite groups, and $\alpha\in Z^3(G;S^1)$ and $\alpha'\in Z^3(G';S^1)$ are different twistings. We prove that if $G'$ is an extra-special group of index a prime number $p$ and order $p^n$ (for some $n\in\N$ fixed), under suitable hypothesis over the twisting $\alpha'$, we can obtain a twisting $\alpha$ on the group $(\Z_p)^n$ such that there exists an isomorphism between the twisted K-theories  ${^{\alpha'}}K_{orb}([*/G'])$ and  ${^{\alpha}}K_{orb}([*/(\Z_p)^n])$. 
\end{abstract}

\section{Introduction}
The twisted K-theory is a successful example for the increasing flow of physical ideas into mathematics. Brought from the physical setup, 
the twisted Orbifold K-theory has been, for the last twenty five years, a fruitful field of ideas and development in K-theory and
 algebraic topology. It emerged from two different sources. The first one, the consideration of the D-brane charge on a smooth 
manifold by Witten in \cite{wit} and the second one, the concept of discrete torsion on an Orbifold by Vafa in \cite{vafa}. 
Although, for any element $\alpha\in H^3(\mathcal{X},\Z)$ one can associate the twisted K-theory ${^{\alpha}}K(\mathcal{X})$, its structure 
is simpler if the element $\alpha$ lies in the image of the pullback associated to the map $X\rightarrow *$. In such a case, 
we call this element a \emph{discrete torsion} since we can see it as an element in the cohomology $H^3(G;\Z)$.\\
On the other hand, an Orbifold is a type of generalization of a smooth manifold. It is a topological space locally modeled as a 
quotient of a manifold by an action of a finite group. When $X$ represents an Orbifold, the twisted K-theory is more interesting because 
it is naturally related with equivariant theories if we specialize in the Orbifold $\mathcal{X}=[X/G]$ where $X$ is a smooth manifold and 
$G$ is a compact Lie group acting almost freely on $X$. In the case of Orbifolds, we have another important advantage to work with and it 
is the cohomological counterpart given by the Chen-Ruan cohomology on Orbifolds $H_{CR}^*(\mathcal{X};\C)$ related to K-theory by the Chern
 character. The Chen-Ruan cohomology of Orbifolds has an interesting non trivial internal product which makes it an algebra. This product 
can be presented in the setup of the K-theory to obtain a stringy product on the K-theory of the Orbifold $K_{orb}(\mathcal{X})$ 
(see \cite{bu},\cite{AR}). If the Orbifold considered has the form $[X/G]$, then the twisted Orbifold K-theory can be related to the 
equivariant K-theory of the spaces of fixed points by the $G$-action on $X$.\\
On the other hand, the tensor product defines a product 
\[{^{\alpha}}K_{orb}(\mathcal{X})\otimes{^{\beta}}K_{orb}(\mathcal{X})\rightarrow {^{\alpha +\beta}}K_{orb}(\mathcal{X})\]
for any pair of elements $\alpha$ and $\beta$ in $H^3(\mathcal{X},\Z)$. In fact, one can obtain a stringy product for the twisted K-theory
 on Orbifolds by using the stringy product defined on each space of fixed points, to define an explicit stringy product in each 
${^{\alpha}}K(\mathcal{X})$ for any $\alpha \in H^3(\mathcal{X},\Z)$. Nevertheless, the crucial information to define the stringy product 
on the twisted K-theory of Orbifolds does not lie in $H^3(\mathcal{X},\Z)$; instead it lies in $H^4(\mathcal{X},\Z)$. Given an element 
$\phi$ in $H^4(\mathcal{X},\Z)$, it defines an element $\theta(\phi)$ in $H^3(\wedge\textsl{ X},\Z)$, where $\wedge \mathcal{X}$ is the 
inertia Orbifold associated to $\mathcal{X}$. Hence, we can define a stringy product over the twisted K-theory Orbifold 
${^{\theta(\phi)}}K_{orb}(\wedge\mathcal{X})$ by using a suitable structure of the inertia Orbifold $\wedge\textsl{X}$. One such product structure is based on the map $\theta(\phi)$ called the \emph{inverse transgression map}, which is considered as the inverse of 
the classical transgression map.\\
For this paper, the stringy product in $K_{orb}(\mathcal{X})$ has a trivial expression as we will restrict our observations to the case 
in which $\mathcal{X}=[\{*\}/G]$, where $G$ is a finite group.\\ The main result in this paper is to present an explicit relation between 
the twisted Drinfeld Double $\D$ and the twisted Orbifold K-theory ${^{\omega}}K_{orb}([*/G])$ for an element $\omega$ of discrete torsion (see Section 4 below.). This allows us to relate the twisted Orbifold K-theories ${^{\omega}}K_{orb}([*/G])$ and ${^{\omega'}}K_{orb}([*/G'])$ for, respectively, two different Orbifolds $[*/G]$ and $[*/G']$ and twistings $\omega\in H^4(G;\Z)$, $\omega'\in H^4(G';\Z)$. In order to obtain such a relation, we modify the stringy product defined in \cite{ARZ} by an element in $R_{\alpha_g}(C(g)\cap C(h))$.\\
We want to express our gratitude to Professor Bernardo Uribe for his important suggestions and ideas for our work.
\section{Pushforward map in the twisted representation ring}
In this section we introduce the pushforward map. Although this map can be defined for almost complex manifolds, we will focus only 
on the case of homogeneous spaces $G/H$. To define this map let us recall the Thom isomorphism theorem in equivariant K-theory.
\begin{fact}[\cite{segal1968}, Proposition 3.2]Let $X$ be a compact $G$-manifold and $p:E\rightarrow X$ a complex $G$-vector bundle over 
$X$. There exists an isomorphism \[\phi:K_G^*(X)\rightarrow K_G^*(E,E\setminus E_0)\] \[\phi([F]):=p^*(F)\otimes \lambda_{-1}(E),\]where $E_0$ is the zero section and the class $\lambda_{-1}(E)$ is the Thom class associated to $[E]$. 
\end{fact}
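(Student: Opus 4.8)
The statement is Segal's equivariant Thom isomorphism, so what follows only sketches the standard argument. The plan is to build the Thom class by hand, reduce the case of a general bundle to trivial ones by a stability trick, and reduce those to a point, where the real content is equivariant Bott periodicity.

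First I would construct $\lambda_{-1}(E)$. Average a Hermitian metric over $G$ and, on the total space of $E$, form the Koszul complex of $G$-vector bundles $\bigl(\pi^{*}\Lambda^{\bullet}E,\, d\bigr)$, where $\pi\colon E\to X$ and $d$ at a vector $v$ is contraction with $v$ (equivalently the self-adjoint $d+d^{*}$ with $d$ exterior multiplication by $v$). At any $v\neq 0$ this complex is acyclic, so it is exact over $E\setminus E_{0}$ and therefore defines a class $\lambda_{-1}(E)\in K_{G}^{0}(E,E\setminus E_{0})$; functoriality of $\Lambda^{\bullet}$ makes it $G$-equivariant and natural in the pair $(X,E)$. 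Since $\pi^{*}\colon K_{G}^{*}(X)\to K_{G}^{*}(E)$ is an isomorphism, $K_{G}^{*}(E,E\setminus E_{0})$ is a $K_{G}^{*}(X)$-module, and one sets $\phi([F])=p^{*}[F]\cdot\lambda_{-1}(E)$, a natural transformation on compact $G$-spaces.

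Next I would prove multiplicativity: under $\Lambda^{\bullet}(E_{1}\oplus E_{2})\cong\Lambda^{\bullet}E_{1}\otimes\Lambda^{\bullet}E_{2}$ the Thom classes satisfy $\lambda_{-1}(E_{1}\oplus E_{2})=\lambda_{-1}(E_{1})\cdot\lambda_{-1}(E_{2})$ (external product), which translates into the composition formula $\phi_{E_{1}\oplus E_{2}}=\phi_{E_{1}}\circ\phi_{E_{2}}$, with $\phi_{E_{1}}$ carried out fibrewise over $E_{2}$. Two reductions then follow. (i) Every $G$-vector bundle $E$ over the compact space $X$ is a direct summand of some trivial bundle $X\times V$ with $V$ a complex $G$-module; writing $X\times V=E\oplus E'$ and $X\times W=E'\oplus E''$ and applying the composition formula, $\phi_{E}$ is simultaneously a split monomorphism and a split epimorphism as soon as $\phi$ is known to be an isomorphism for trivial bundles, hence itself an isomorphism. (ii) For a trivial bundle, an external product over $X$ reduces the claim to $X$ a point, i.e.\ to $\phi\colon R(G)\to\widetilde{K}_{G}(S^{V})$ being an isomorphism, and multiplicativity again lets one take $V$ irreducible.

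The remaining base case is equivariant Bott periodicity, and this is where I expect the real work to lie. I would establish it either by identifying $\lambda_{-1}(V)$ with the symbol class of the $G$-equivariant Dolbeault complex on $V$ and using multiplicativity of the equivariant index (Atiyah's treatment of Bott periodicity via elliptic operators), or by inducting on $\dim_{\C}V$ through the cofibre sequences relating $S^{V}$, $S^{V\oplus\C}$ and the disc/sphere bundles of $\C$, together with the ring structure of $\widetilde{K}_{G}(S^{\C})$ over a point and the five lemma. The main obstacle is exactly that, unlike the non-equivariant case, one cannot prove the theorem by a disc-bundle Mayer--Vietoris over a contractible base: $G$-invariant opens need not trivialize $E$ equivariantly, so Bott periodicity for representation spheres genuinely has to be proved first and only afterwards propagated to all $E$ by the stability-and-multiplicativity argument above.
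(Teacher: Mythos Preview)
The paper does not prove this statement at all: it is stated as a \emph{Fact} with a citation to Segal's \emph{Equivariant K-theory}, Proposition~3.2, and is simply used as input for defining the pushforward map. There is therefore no ``paper's own proof'' to compare your proposal against.

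Your sketch is a faithful outline of the standard argument (construct the Koszul/Thom class, prove multiplicativity, reduce to trivial bundles via complementation, reduce to a point, and invoke equivariant Bott periodicity). That is essentially Segal's route, so your proposal is correct, but for the purposes of this paper nothing beyond the citation is expected.
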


\begin{nota}We need recall how to define the \textit{normal bundle}. If $M$, $N$ are $G$-manifolds (that means a manifold with a smooth $G$-action) and $f:M\rightarrow N$ is a $G$-embedding  we can define a (real) vector bundle $\tau$ such that $df(TM)\oplus\tau\cong TN$ (for details in this construction the reader can consult \cite{td87}). If the map $f$ is not a $G$-embedding we can consider $f:M\rightarrow N\times D^j$ ($D^j$ is the unitary disk in $\mathbb{R}^j$ with the trivial $G$-action) for  sufficiently large $j$ and by corollary 1.10 in \cite{wasserman1969} we can approximate $f$ by an immersion $g_f$, then we define the normal bundle for $f$ as the normal bundle of $g_f$. 
\end{nota}

Now, we proceed to define the pushforward map $f_*:K_G^*(X)\rightarrow K_G^*(Y)$ for a differentiable map $f:X\rightarrow Y$ between 
almost complex $G$-manifolds as follows: let $\tau$ be the normal bundle associated to the application $f:X\rightarrow Y$. We define the 
pushforward, which will be denoted by $f_{\ast}$, as the following composition 
\[K_G^*(X)\xrightarrow{\phi}K_G^*(\tau,\tau\setminus \tau_0)\xrightarrow{j}K_G^*(Y\times D^j,(Y\times D^j)\setminus g_f(X))\xrightarrow{i_\sharp}K_G^*(Y\times D^j)\cong K_G^*(Y),\]
where $\phi$ is the Thom isomorphism. The map $j$ is given by excision , the map $i_\sharp$ is the pullback map induced by the inclusion and the last isomorphism is induced by the natural inclusion. The pushforward map can be defined
 also in the twisted case (see \cite{carey2006}).
Consider the following diagram of inclusions:
\[\begin{diagram}
\node{G}\node{K}\arrow{w}\\\node{H}\arrow{n}\node{H\cap K}\arrow{w}\arrow{n}
\end{diagram}\]
from which we get a diagram of surjections:
\[\begin{diagram}
\node{G/G}\node{G/K}\arrow{w,t}{i_1}\\\node{G/H}\arrow{n,r}{j_1}\node{G/(H\cap K)}\arrow{w,t}{i_2}\arrow{n,r}{j_2}
\end{diagram}\]
Using this diagram we obtain the map: 
\[j_{2*}\circ i_2^*:K_G^*(G/H)\rightarrow K_G^*(G/K)\] 
\[j_{2*}\circ i_2^*([E])=[\lambda_{-1}(\tau_{j_2})\otimes i_2^*(p^*(E))],\]where $\tau_{j_2}$ is the normal bundle of $j_2$, and the map
\[i_1^*\circ j_{1*} :K_G^*(G/H)\rightarrow K_G^*(G/K)\]
\[i_1^*\circ j_{1*} ([E])=[i_1^*(\lambda_{-1}(\tau_{j_1})\otimes i_1^*(p^*(E))].\]
Afterwards, we compare the two applications and we conclude that the \textit{obstruction bundle} is $\lambda_{-1}(i_1^*(\tau_{j_1})/\tau_{j_2}))$. This means that
\begin{equation}\label{hazexceso}
i_1^*\circ j_{1*} ([E])=j_{2*}\circ i_2^*([E])\otimes \lambda_{-1}(i_1^*(\tau_{j_1})/\tau_{j_2})).
\end{equation}
We consider the particular case of the groups $H=C_G(x)$ and $K=C_G(y)$, where $x$ and $y$ are elements in the group $G$, $C_G(x)$ and $C_G(y)$ denote their centralizers in $G$, respectively. Then, by equation (\ref{hazexceso}) we get an obstruction bundle which is denoted as $\gamma_{x,y}$.
\section{Twisted Orbifold K-theory for the Orbifold $[*/G]$}

The goal of this section is to consider a K-theory structure on an Orbifold structure defined by the trivial action of a finite group over the space $\{*\}$. This is a particular case of a more general kind of spaces that are obtained by almost free actions of a compact Lie group over compact manifolds. These spaces naturally have an Orbifold structure that sets a basis for all developments in this paper. When the manifold is one point and the group is finite, all the hypothesis in the already defined theory hold. \\
Let us consider the inertia Orbifold $\wedge[*/G]$ for a finite group $G$. We define the Orbifold K-theory for the Orbifold $[*/G]$ as 
the module: \[K_{orb}([*/G]):=K(\wedge[*/G])\cong \bigoplus_{(g)}K([*/C_G(g)])\cong \bigoplus_{(g)} K_{C_G(g)}(*),\]where $(g)$ denotes the class of conjugation of the 
element $g\in G$ and $C_G(g)$ denotes the centralizer of the element $g\in G$. In this case the Orbifold K-theory introduced in 
\cite{ARZ} turns out to be simply $K_G(*)$, which is additively isomorphic to the group $\bigoplus_{(g)}R(C_G(g))$ (see \cite{AR}), where $R(C_G(g))$ denotes the Grothendieck ring associated to the semi-group of isomorphism classes of linear representations of the group $C_G(g)$, and the sum is taken over conjugacy classes. The product structure in $K_G(*)$ is defined as follows: consider the maps \[e_1:C_G(g)\cap C_G(h)\times C_G(g)\cap C_G(h)\rightarrow C_G(g), e_1(a,b)=a,\] \[e_2:C_G(g)\cap C_G(h)\times C_G(g)\cap C_G(h)\rightarrow C_G(h), e_2(a,b)=b,\] \[e_{12}:C_G(g)\cap C_G(h)\times C_G(g)\cap C_G(h)\rightarrow C_G(gh), e_{12}(a,b)=ab.\]Note that for any element $\tau \in G$, the map $\phi_{\tau}:G\rightarrow G$ defined by $\phi_{\tau}(g)=\tau g\tau^{-1},$  implies that $\phi_{\tau}\circ e_i=e_i\circ(\phi_{\tau},\phi_\tau)$. Thus, the maps $e_i$ are $\phi_{\tau}$-equivariant for any element $\tau \in C_G(g)$. Given $E$ in $R(C_G(g))$ and $F$ in $ R(C_G(h))$, we define the product
\begin{equation}\label{prodstar}
E\star F:=e_{12*}(e_1^*(E)\otimes e_2^*(F)\otimes\gamma_{g,h}) \in R(C_G(gh)).
\end{equation}
Since the action is trivial, it follows from Theorem 2.2 in \cite{segal1968} that $R(C_G(g))=K_{C_G(g)}(*)$. Thus, the product can be seen as \[\star:K_{C_G(g)}(*)\times K_{C_G(h)}(*)\rightarrow K_{C_G(gh)}(*)\] in the setup of equivariant K-theory. We note that the product defined in equation (\ref{prodstar}) is analogous to the stringy and twisted stringy product defined by B. Uribe and the second author in \cite{bu}, in the case in which $G$ is an abelian group. Let $\alpha$ be an cocycle in $Z^3(G;S^1)$, i.e. $\alpha$ is a function $\alpha:G\times G\times G\rightarrow S^1$ such that $\alpha(a,b,c)\alpha(a,bc,d)\alpha(d,c,d)=\alpha(ab,c,d)\alpha(a,b,cd)$ for all $a,b,c,d\in G$. We proceed to define the twisted Orbifold K-theory ${^{\alpha}}K_{orb}([*/G])$.\\
For the global quotient $[X/G]$ and the element $\alpha\in Z^3(G;S^1)$, the twisted Orbifold K-theory is defined as the sum
\begin{equation}\label{TOK}
{^{\alpha}}K_{orb}([X/G]):=\bigoplus_{g\in C}{^{\alpha_g}}K_{C_G(g)}(X^g)
\end{equation}
where $C$ is a set of representatives of the conjugacy classes in $G$ and $\alpha_g$ is the inverse transgression map (see below for details). In particular, if $G$ is an abelian group, the set $C$ is the group $G$. For every group $H$ and $\beta\in Z^2(H;S^1)$ we take its associated group $H_{\beta}$ given by the central extension
\begin{equation}
1\rightarrow S^1\rightarrow H_{\beta} \rightarrow H\rightarrow 1.
\end{equation}Recall that the group $H_{\beta}$ is the set $S^1\times H$, with the group operation defined by \[(s_1,h_1)*(s_2,h_2):=(s_1s_2\beta(h_1,h_2),h_1h_2).\]The twisted equivariant K-theory ${^{\beta}}K_{H}(X)$ is defined as the class of $H_{\beta}$-equivariant vector bundles such that the action of the center $S^1$ restricts to multiplication on the fibres. In the case of the space $X=\{*\}$, the twisted equivariant K-theory ${^{\beta}}K_{H}(*)$ coincides with $R_{\beta}(H)$, the Grothendieck ring of classes of projective representations for the group $H$ (see \cite{Karpi} for a precise definition of $R_{\beta}(H)$).\\
Returning to the case of the Orbifold $[*/G]$ for a finite group $G$, the twisted Orbifold K-theory defined in equation \ref{TOK} takes the form 
\begin{equation}
{^{\alpha}}K_{orb}([*/G]):=\bigoplus_{g\in C}{^{\alpha_g}}K_{C_G(g)}(*)\cong \bigoplus_{g\in C}R_{\alpha_g}(C_G(g)).
\end{equation}
\subsection{Inverse transgression map}
We review the inverse transgression map for finite groups to describe the multiplicative structure in the module ${^{\alpha}}K_{orb}([*/G])$. Throughout this section we follow the development presented in section 3.2. in \cite{bu}. Let us recall the definition of the inverse transgression map for a global quotient $[M/G]$: For $g\in G$, consider the action of $C_G(g)\times \Z$ on $M^g=\{x\in M|gx=x\}$ given by $(h,m)\cdot x:=hg^mx$ and the homomorphism\\
\begin{center}\begin{tabular}{cccc}$\psi_g$&$:C_G(g)\times \Z$&$\rightarrow$&$G$\\
&$(h,m)$&$\mapsto$&$hg^m$\\ 
\end{tabular}\\ \end{center}
Thus, the inclusion $i_g:M^g\rightarrow M$ becomes a $\psi_g$ equivariant map and induces a homomorphism\[i_g^*:H_G^*(M;\Z)\rightarrow H^*_{C_G(g)\times\Z}(M^g;\Z).\] From the isomorphisms \begin{align} H^*_{C_G(g)\times\Z}(M^g;\Z)&\cong H^*(M^g\times_{C_G(g)}\times EC_G(g)\times B\Z;\Z)\\ &\cong H^*_{C_G(g)}(M^g;\Z)\otimes_{\Z}H^*(S^1;\Z)\end{align} we have, for each $k$, \[i_g^*:H^k_G(M;\Z)\rightarrow H^k_{C_G(g)}(M^g;\Z)\oplus H^{k-1}_{C_G(g)}(M^g;\Z).\]Hence, we define the inverse transgression map as the application induced by projecting on the second factor\[\tau_g:H_G^k(M;\Z)\rightarrow H^{k-1}_{C_G(g)}(M^g;\Z).\]In the particular case that $[M/G]=[*/G]$ the definition above turns into:
\begin{defi}\label{defitran}
For any element $\alpha\in Z^3(G;S^1)$, the inverse transgression map is defined as the application
\[\tau_g:H_G^k(*;\Z)\rightarrow H_{C_G(g)}^{k-1}(*;\Z)\]induced by $\tau_g$ on each $k$.
\end{defi}
\subsection{Product in the twisted case}
Take $\alpha\in Z^3(G;\Z)$. Let us consider the Orbifold $[*/G]$ where $G$ is a finite group. Now, consider the module:
\begin{equation}\label{pretorcida}
{^{\alpha}}K_{orb}([*/G]):=\bigoplus_{g\in C}R_{\alpha_g}(C_G(g)),
\end{equation}
where $\alpha_g\in H^2(C(g);\Z)$ denotes the inverse transgression map of $\alpha$. The goal of this section is to define an associative product for this module, i.e. we show that it's possible to endow the module ${^{\alpha}}K_{orb}([*/G])$ with a ring structure. For simplicity, we denote $C_G(g)$ as $C(g)$. Consider the inclusion maps of groups \[i_g:C(g)\cap C(h)\rightarrow C(g), i_h:C(g)\cap C(h)\rightarrow C(h)\]and\[i_{gh}:C(g)\cap C(h)\rightarrow C(gh)\]for $g,h\in G$. These maps induce the restriction maps: 
\[i_g^*:H^2(C(g);S^1)\rightarrow H^2(C(g)\cap C(h);S^1),\]
\[i_h^*:H^2(C(h);S^1)\rightarrow H^2(C(g)\cap C(h);S^1).\]
and the morphism $i_{gh}$ induces a map $(i_{gh})_*:H^2(C(g)\cap C(h);S^1)\rightarrow H^2(C(gh);S^1)$, which is the induction morphism in group cohomology (See for example \cite{td87}).\\
Given $E\in R_{\alpha_g}(C(g))$, we consider it as a $C(g)_{\alpha_{g}}$-module that restricts to multiplication on the fibres over $S^1$. Therefore, we get the following commutative diagram for the inclusion $i_g$ and the identity map $s$ on $S^1$:
\begin{equation}
\begin{tabular}{c c c c c c c c c}
  $1$&$\rightarrow$& $S^1$& $\rightarrow$ &$C(g)_{\alpha_{g}}$ & $\rightarrow$ & $C(g)$ &$\rightarrow$& 1 \\
  &&$\uparrow s$&&$\uparrow (s, i_g)$&&$\uparrow i_g$&&\\
  $1$&$\rightarrow$& $S^1$& $\rightarrow$ &$(C(g)\cap C(h))_{i_g^*(\alpha_{g})}$ & $\rightarrow$ & $C(g)\cap C(h)$ &$\rightarrow$& 1\\
\end{tabular}\end{equation}
This implies that any $C(g)_{\alpha_{g}}$-module restricts to a $(C(g)\cap C(h))_{i_g^*(\alpha_{g})}$-module, which is denoted by $i_g^*(E)$. In particular, for any $(E,F)\in R_{\alpha_g}(C(g))\times R_{\alpha_h}(C(h))$, we get the following map:\\
\begin{align*}
R_{\alpha_g}(C(g))\times R_{\alpha_h}(C(h))&\rightarrow R_{i_g^*(\alpha_g)}(C(g)\cap C(h))\times R_{i_h^*(\alpha_h)}(C(g)\cap C(h))\\
(E,F)&\mapsto  (i_g^*(E),i_h^*(F))\\
\end{align*}
Now, from the central extensions: 
\[1\rightarrow S^1 \rightarrow (C(g)\cap C(h))_{i_g^*(\alpha_{g})} \rightarrow C(g)\cap C(h) \rightarrow 1,\]
\[1\rightarrow S^1 \rightarrow (C(g)\cap C(h))_{i_h^*(\alpha_{h})} \rightarrow C(g)\cap C(h) \rightarrow 1\]
induced by $i_g^*(\alpha)$ and $i_h^*(\alpha)\in H^2(C(g)\cap C(h);S^1))$ we get that:\[{\small 1\rightarrow S^1\times S^1 \rightarrow (C(g)\cap C(h))_{i_g^*(\alpha_{g})}\times (C(g)\cap C(h))_{i_h^*(\alpha_{h})} \rightarrow C(g)\cap C(h)\times C(g)\cap C(h) \rightarrow 1.}\]For $E\in R_{i_g^*(\alpha_g)}(C(g))$ and $F\in R_{i_g^*(\alpha_g)}(C(g))$, the tensor product $E\otimes F$ is naturally a $(C(g)\cap C(h))_{i_g^*(\alpha_{g})}\times (C(g)\cap C(h))_{i_h^*(\alpha_{h})}$-module that restricts to multiplication on the fibres by elements of $S^1$. By considering the action restricted to the diagonal
\[\Delta(C(g)\cap C(h))\subset C(g)\cap C(h)\times C(g)\cap C(h),\]we get the central extension:
\[1\rightarrow S^1 \rightarrow (C(g)\cap C(h))_{i_g^*(\alpha_{g})i_h^*(\alpha_{h})} \rightarrow \Delta(C(g)\cap C(h)) \rightarrow 1\] which corresponds to the element $i_g^*(\alpha_{g})i_h^*(\alpha_{h})\in H^2(C(g)\cap C(h);S^1)$. Thus, the following holds:
\begin{align*}
R_{i_g^*(\alpha_g)}(C(g)\cap C(h))\times R_{i_h^*(\alpha_h)}(C(g)\cap C(h))&\rightarrow R_{i_g^*(\alpha_g)i_h^*(\alpha_h)}(C(g)\cap C(h))\\
(E,F)&\mapsto E\otimes F.\\
\end{align*}
Now, since $i_g^*(\alpha_g)i_h^*(\alpha_h)=i_{gh}^*(\alpha_{gh})$ in $H^2(C(g)\cap C(h);S^1)$ are cohomologous cocycles, (see \cite{ARZ} Proposition 4.3), it follows that:
\[R_{i_g^*(\alpha_g)i_h^*(\alpha_h)}(C(g)\cap C(h))\cong R_{i_{gh}^*(\alpha_{gh})}(C(g)\cap C(h)).\] Therefore, the induction map can be defined as: 
\begin{align*}
R_{i_{gh}^*(\alpha_{gh})}(C(g)\cap C(h))&\rightarrow R_{(i_{gh})_*i_{gh}^*(\alpha_{gh})}(C(gh))\\
A&\mapsto Ind_{C(g)\cap C(h)}^{C(gh)}(A).\\
\end{align*}
Thus, a product on the module (\ref{pretorcida}) can be obtained from the previously described morphisms to get:
\begin{equation}\label{productotorcido}
R_{\alpha_g}(C(g))\times R_{\alpha_h}(C(h))\rightarrow R_{\alpha_{gh}}(C(gh))
\end{equation}
defined by:\[(E,F)\mapsto E\star_{\alpha} F:=Ind_{C(g)\cap C(h)}^{C(gh)}(i_g^*(E)\otimes i_h^*(F)\otimes \gamma_{g,h}),\] where $\gamma_{g,h}$ is defined as the excess bundle as in Section 2.
\begin{defi}
By using the restriction notation, we define the twisted stringy product in the module ${^{\alpha}}K_{orb}([*/G])$ as the map:
\begin{equation}
\begin{aligned}R_{\alpha_g}\hspace{-0.05cm}(C(g))\!\times\! R_{\alpha_h}\hspace{-0.05cm}(C(h))&\rightarrow R_{\alpha_g\alpha_h}\hspace{-0.05cm}(C(gh))\\
(E,F)\mapsto & { I^{C(\!gh\!)}_{C(\!g\!)\cap C(\!h\!)}(Res^{C(\!g\!)}_{C(\!g\!)\cap C(\!h\!)}(E)\!\otimes\! Res^{C(\!h\!)}_{C(\!g\!)\cap C(\!h\!)}(F)\otimes \gamma_{g,h})}
\end{aligned}
\end{equation}
where $Res_{C(g)\cap C(h)}^{C(g)}$ denotes the restriction of $\alpha$-twisted representations of $C(g)$ to $i_g(\alpha)$-representations of $C(g)\cap C(h)$ (respectively for $h$).

\end{defi}
\section{Twisted Orbifold K-theory and the algebra $D^{\omega}(G)$}
The goal of this section is to give an introduction of the \emph{twisted Drinfeld double} $D^{\omega}(G)$ and to show an explicit 
relation with the twisted Orbifold K-theory. Our main reference is \cite{sw1}. Let us recall the definition and the main properties 
of the twisted Drinfeld double to clarify the nature of this structure and its representations. From a different point of view, we can also obtain some properties of the stringy product defined on the sections above, using the properties of the representations of the twisted Drinfeld double. Namely, the fact that the Grothendieck ring of these representations is isomorphic to the twisted Orbifold K-theory with the structure induced by the stringy product, which will be proven in section 4.2. In particular, because of the associativity of the tensor product
of the $\D$-modules we can obtain a proof of the associativity of the stringy product defined above (see \ref{asociativo} below). \\
Let $G$ be a finite group and $k$ an algebraically closed 
field. Let $\omega$ be an element in $Z^{3}(G,k^*)$, that is, a function $\omega:G\times G\times G\rightarrow k^*$ such that 
\[\omega(a,b,c)\omega(a,bc,d)\omega(d,c,d)=\omega(ab,c,d)\omega(a,b,cd)\text{   for all }a,b,c,d\in G.\] We define the 
quasi-triangular quasi-Hopf algebra $D^{\omega}(G)$ as the vector space $(kG)^*\otimes (kG)$, where $(kG)^*$ denotes the dual of 
the algebra $kG$ (see \cite{drin}) and the algebra structure in $D^{\omega}(G)$ is given as follows: consider the canonical basis 
$\{\delta_g\otimes \bar{x}\}_{g,x\in G}$ of $D^{\omega}(G)$, where $\delta_g$ is the function such that $\delta_g(h)=1$ if $h=g$ 
and $0$ otherwise. We denote $\delta_g\otimes \bar{x}=:\delta_g\bar{x}$. Now, we define the product of elements in the basis by:
\begin{equation}
(\delta_g\bar{x})(\delta_h\bar{y})=\omega_g(x,y)\delta_g\delta_{xhx^{-1}}\overline{xy}
\end{equation}
where $\omega_g$ is the image of $\omega$ via the inverse transgression map of the element $g\in G$ as in definition (\ref{defitran}). The multiplicative identity for this product is the element $1_{\D}=\bigoplus_{g\in G}\delta_g\bar{1}$. Now, we use the notation $\delta_g$ for the element $\delta_g\bar{1}$. The coproduct $\Delta:\D\rightarrow\D\otimes\D$ in the algebra $D^{\omega}(G)$ is defined by the application
\begin{equation}
\Delta(\delta_g\bar{x})=\bigoplus_{h\in G}\gamma_x(h,h^{-1}g)(\delta_h\bar{x})\otimes(\delta_{h^{-1}g}\bar{x}),
\end{equation}
where \[\gamma_x(h,l)=\frac{\omega(h,l,x)\omega(x,x^{-1}hx,x^{-1}lx)}{\omega(h,x,x^{-1}lx)}.\]
The algebra $D^{\omega}(G)$ endowed with these operations is usually called the twisted Drinfeld Double.
\subsection{Representations of $D^{\omega}(G)$}
Let $U, V$ be modules over the algebra $D^{\omega}(G)$. Consider the tensor product $U\otimes V$ as a $D^{\omega}(G)$-module endowed with the action from $D^{\omega}(G)$, induced by the coproduct $\Delta$. Note that the field $k$ can be considered as a trivial $D^{\omega}(G)$-module, which is the multiplicative identity for the tensor product of $\D$-modules. In particular, for $k=\C$, we define the ring of representations $R(\D)$ of $\D$ as the $\C$-algebra generated by the set of isomorphism classes of $\D$-modules with the direct sum of modules as the sum operation and the tensor as the product operation. We define the ideal $R_0(\D)$ generated by all combinations $[U]-[U']-[U'']$ ([.] denotes the isomorphism class) where $0\rightarrow U'\rightarrow U\rightarrow U''\rightarrow 0$ is a short exact sequence of $\D$-modules. Now, we define the Grothendieck ring $R(\D)$ as the quotient between $Rep(\D)$ and the ideal $R_0(\D)$.\\
The algebra $\D$ is quasi-triangular with:
\[A=\bigoplus_{g,h\in G}\delta_g\bar{1}\otimes\delta_h\bar{g}\text{, \, and         }A^{-1}=\bigoplus_{g,h\in G}\omega_{ghg^{-1}}(g,g^{-1})^{-1}\delta_g\bar{1}\otimes\delta_h\overline{g^{-1}}.\] Thus, $A\Delta(a)A^{-1}=\sigma(\Delta(a))$ for all $a\in\D$, where $\sigma$ is the automorphism that exchanges the images in the coproduct. Therefore, if $U$ and $V$ are $\D$-modules, this equation implies that $U\otimes V$ and $V\otimes U$ are isomorphic as $\D$-modules, that is, the algebra $R(\D)$ is commutative. Now, assume that $\beta:G\times G\rightarrow \C^*$ is a cochain with coboundary:\[\delta\beta(a,b,c)=\beta(b,c)\beta(a,bc)\beta(ab,c)^{-1}\beta(a,b)^{-1}.\] Then, the algebra $D^{\omega\delta\beta}(G)$ is isomorphic to $\D$ given through the map: \[\upsilon(\delta_g\bar{x})=\frac{\beta(g,x)}{\beta(x,xgx^{-1})}\delta_g\bar{x}.\]In particular, we get the isomorphism:\[\upsilon^*:R(D^{\omega\delta\beta}(G))\stackrel{\cong}{\rightarrow}R(\D).\]Next, we consider the following Theorem (cf. \cite{will}, Thm. 19):
\begin{teor}\label{teoremaaditivo}
The ring $R(\D)$ is additively isomorphic to the ring \[\bigoplus_{(g)\subset G}R_{\omega_g}(C(g)),\]where $(g)$ denotes the conjugacy class of $g\in G$
\end{teor}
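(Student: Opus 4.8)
The plan is to realise the claimed decomposition of $R(\D)$ as the one induced by a block decomposition of the algebra $\D$ itself, and then to identify each block, up to Morita equivalence, with a twisted group algebra. Writing $\delta_g$ for $\delta_g\bar 1$ as in the text, the structure constants give $(\delta_g)(\delta_h\bar y)=\delta_g\delta_h\,\bar y$ and $(\delta_h\bar y)(\delta_g)=\delta_h\,\delta_{ygy^{-1}}\,\bar y$ (using that $\omega$, hence each $\omega_g$, is normalised). Thus $\{\delta_g\}_{g\in G}$ is a complete set of orthogonal idempotents, and multiplying $\delta_h\bar y$ on the left or right by these idempotents conjugates the label. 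Consequently, for every conjugacy class $(g_0)\subset G$ the element $e_{(g_0)}:=\sum_{g'\in(g_0)}\delta_{g'}$ is a \emph{central} idempotent of $\D$; the $e_{(g_0)}$ are orthogonal and sum to $1_{\D}$, so $\D=\bigoplus_{(g_0)}\D\,e_{(g_0)}$ as an algebra. Since a module over a finite product of algebras splits canonically as a direct sum of modules over the factors, and short exact sequences split accordingly, this yields an additive decomposition $R(\D)=\bigoplus_{(g_0)}R(\D\,e_{(g_0)})$.

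Next, fix a representative $g_0$ and examine the corner ring $\delta_{g_0}\,\D\,\delta_{g_0}$. From the product formula, $\delta_{g_0}(\delta_h\bar y)\delta_{g_0}\ne 0$ precisely when $h=g_0$ and $yg_0y^{-1}=g_0$, i.e. $y\in C(g_0)$; hence this corner has basis $\{\delta_{g_0}\bar y: y\in C(g_0)\}$ with multiplication $(\delta_{g_0}\bar x)(\delta_{g_0}\bar y)=\omega_{g_0}(x,y)\,\delta_{g_0}\overline{xy}$. That is, $\delta_{g_0}\,\D\,\delta_{g_0}$ is precisely the twisted group algebra $\C_{\omega_{g_0}}[C(g_0)]$ of the $2$-cocycle $\omega_{g_0}$, the inverse transgression of $\omega$ at $g_0$ in the sense of Definition \ref{defitran}; here one uses that $\tau_{g_0}$ carries the $3$-cocycle $\omega$ to an honest element of $H^2(C(g_0);S^1)$, a direct consequence of the $3$-cocycle identity. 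By definition, the Grothendieck ring of modules over $\C_{\omega_{g_0}}[C(g_0)]$ is $R_{\omega_{g_0}}(C(g_0))$.

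It remains to pass from the corner to the whole block $B:=\D\,e_{(g_0)}$. The key point is that $\delta_{g_0}$ is a \emph{full} idempotent in $B$, i.e. $B\,\delta_{g_0}\,B=B$: for $g'\in(g_0)$ choose $a\in G$ with $ag_0a^{-1}=g'$, and then $(\delta_{g'}\bar a)\,\delta_{g_0}=\delta_{g'}\bar a$ while $\delta_{g_0}\,(\delta_{g_0}\overline{a^{-1}y})=\delta_{g_0}\overline{a^{-1}y}$, so $(\delta_{g'}\bar a)\,\delta_{g_0}\,(\delta_{g_0}\overline{a^{-1}y})$ is a nonzero scalar multiple of $\delta_{g'}\bar y$; as $g'$ and $y$ vary these exhaust a basis of $B$, so fullness holds. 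In fact, fixing conjugating elements once and for all, the $\delta_{g'}$ ($g'\in(g_0)$) and the elements $\delta_{g'}\bar a$ assemble into a system of matrix units realising $B\cong M_{[G:C(g_0)]}\big(\C_{\omega_{g_0}}[C(g_0)]\big)$. Either via this isomorphism or via Morita invariance of the Grothendieck group under $A\mapsto\delta\,A\,\delta$ for a full idempotent $\delta$, we get $R(B)\cong R_{\omega_{g_0}}(C(g_0))$, and summing over conjugacy classes gives the asserted additive isomorphism $R(\D)\cong\bigoplus_{(g)\subset G}R_{\omega_g}(C(g))$. (This isomorphism is only additive: the tensor product on $R(\D)$, coming from the coproduct $\Delta$, does not respect the block decomposition.)

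The block decomposition and the induced splitting of $R(\D)$ are formal; the substance of the proof is the corner computation — in particular making the $2$-cochain $\omega_{g_0}$ appearing in the product on $\D$ literally match the cocycle defining $R_{\omega_{g_0}}(C(g_0))$, which requires choosing normalisations consistently with Definition \ref{defitran} — and assembling the matrix units cleanly enough to obtain an honest algebra isomorphism $B\cong M_{[G:C(g_0)]}(\C_{\omega_{g_0}}[C(g_0)])$ rather than a mere Morita equivalence. I expect this cocycle and matrix-unit bookkeeping to be the main obstacle; everything else is standard structure theory.
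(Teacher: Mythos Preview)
Your proof is correct and follows essentially the same approach as the paper: both decompose $\D$ into blocks $D^{\omega}((g))$ indexed by conjugacy classes, identify the corner $\delta_{g_0}\D\delta_{g_0}$ (the paper's $S^{\omega}(g_0)$) with the twisted group algebra $\C_{\omega_{g_0}}[C(g_0)]$, and then pass from corner to block via Morita equivalence. The only difference is packaging --- you phrase the last step via full idempotents and matrix units, while the paper writes down the mutually inverse functors $U\mapsto U\otimes_{S^{\omega}(h)}D^{\omega}(h)$ and $V\mapsto V\delta_h$ directly.
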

\begin{proof}
For all $x\in G$, we take the subspaces $S^{\omega}(x):=\bigoplus_{g\in C(x)}\C\delta_x\bar{g}$ and $D^{\omega}(x):=\bigoplus_{g\in G}\C\delta_x\bar{g}$ of $\D$. It holds that $S^{\omega}(x)$ is a subalgebra of $\D$ with identity element $\delta_x\bar{1}$ such that, from the product defined in $\D$, it follows that $S^{\omega}(x)\cong R_{\omega_x}C(x)$ where $R_{\omega_x}C(x)$ is defined in \cite{Karpi}. Given $(g)\subset G$, consider $D^{\omega}((g)):=\bigoplus_{h\in (g)}D^{\omega}(h)$. Note that $\D\cong \bigoplus_{(g)\subset G}D^{\omega}((g))$ (additively). On the other hand, for an element $h$ in a fixed conjugacy class $(g)$, take a $S^{\omega}(h)$-module $U$, i.e. a $R_{\omega_h}C(h)$-module and define the map:
\[U\mapsto U\otimes_{S^{\omega}(h)}D^{\omega}(h),\]whose image is a $D^{\omega}((g))$-module if we take the action of $D^{\omega}((g))$ over $U\otimes_{S^{\omega}(h)}D^{\omega}(h)$ as right multiplication in the second factor. However, for $V$ a $D^{\omega}((g))$-module, we define the map:
\[V\mapsto V\delta_h\bar{1}.\]
Note that the image for this map is a $R_{\omega_h}C(h)$-module. Thus, there is an equivalence between $R_{\omega_h}C(h)$-modules and $D^{\omega}((g))$-modules. Therefore, from Theorem (\cite[Thm. I.3.2]{Karpi}), it follows that for any $h\in (g)$:
\[R_{\omega_h}(C(h))\cong R(D^{\omega}((g))),\]and the theorem follows.
\end{proof}
From \cite{dpr} we get that it is possible to explicitely describe the morphism using the \emph{induction DPR} which is defined on each $R_{\alpha}(C(g))$ for $g\in G$. Namely, let $(\rho, V)$ be a twisted representation of the group $C(g)$ and define the representation $\psi((\rho,V)):=(\pi_{\rho},A)$ of $\D$ as given by the formula:
\begin{equation}\label{Indpr}
A:=Ind_{C(g)}^G(V) \text{,      and       }\pi_{\rho}:=\pi_{\rho}(\delta_k\bar{x}) x_j\otimes v =\delta_{k}\delta_{x_s g x_s^{-1}}
\frac{\omega_k(x,x_j)}{\omega_k(x_s,r)} x_s\otimes \rho(r)v.     
\end{equation} where $x_j$ is a representative of a class in $G/C(g)$, $r\in C(g)$ and the element $x_s$ is a representative of a class in $G/C(g)$, such that $xx_j=x_sr$.
\subsection{Relation between $R(\D)$ and the twisted K-theory of the Orbifold $[*/G]$}
Let us consider an element $\omega\in Z^3(G;S^1)$. By the equiation \ref{pretorcida} the twisted
 Orbifold K-theory of the Orbifold $[*/G]$ is the ring:
\begin{equation}
{^{\omega}}K_{orb}([*/G])=\bigoplus_{(g)\subset G}{^{\omega_g}}K_{C(g)}(*)\cong \bigoplus_{(g)\subset G}R_{\omega_g}(C(g))
\end{equation}
By theorem (\ref{teoremaaditivo}), there exists an additive isomorphism between $R(\D)$ and the twisted Orbifold K-theory 
${^{\omega}}K_{orb}([*/G])$. We will show that if we endow this ring with the twisted product $\star_{\alpha}$, then the additive 
isomorphism is in fact a ring isomorphism. 
The DPR induction is defined as $(I^G_{C(g)}(E),\rho_{\pi})$ where $(E,\pi)$ is an element in $R_{\omega_g}(C(g))$. Let us consider 
two elements $E$ and $F$ in $R_{\omega_g}(C(g))$ and $R_{\omega_h}(C(h))$ respectively. The tensor product of the DPR-induction of these elements can be related to the twisted product $\star$ via the Frobenious reciprocity as follows:
\begin{align*} I^G_{C(g)}(E)\otimes I^G_{C(h)}(F) &\cong I^G_{C(g)}(E\otimes R^G_{C(g)}(I^G_{C(h)}(F)))\\
&\cong I^G_{C(g)}(E\otimes I^G_{C(g)}(R^{C(h)}_{C(g)\cap C(h)}(F)\otimes \gamma_{g,h}))\\
&\cong I^G_{C(g)}(I^{C(g)}_{C(g)\cap C(h)}(R^{C(g)}_{C(g)\cap C(h)}(E)\otimes R^{C(h)}_{C(g)\cap C(h)}(F)\otimes \gamma_{g,h}))\\
&\cong I^G_{C(g)\cap C(h)}(R^{C(g)}_{C(g)\cap C(h)}(E)\otimes R^{C(h)}_{C(g)\cap C(h)}(F)\otimes \gamma_{g,h})\\
&\cong I^G_{C(gh)}(I^{C(gh)}_{C(g)\cap C(h)}(R^{C(g)}_{C(g)\cap C(h)}(E)\otimes R^{C(h)}_{C(g)\cap C(h)}(F)\otimes \gamma_{g,h}))\\
&\cong I^{G}_{C(gh)}(E\star_{\omega} F)
\end{align*}
By the above relation, we conclude the following
\begin{prop}\label{mainresult} There exists a ring isomorphism:
\[({^{\omega}}K_{orb}([*/G]),\star_{\omega})\cong(R(\D),\otimes).\] 
\end{prop}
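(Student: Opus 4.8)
The plan is to exhibit the additive isomorphism of Theorem \ref{teoremaaditivo} concretely, as the DPR induction, and then to check that this map intertwines the twisted stringy product $\star_\omega$ with the tensor product of $\D$-modules. First I would set
\[
\Psi\colon \bigoplus_{(g)\subset G} R_{\omega_g}(C(g)) \longrightarrow R(\D),
\]
defined on the $(g)$-summand by the map $\psi$ of \eqref{Indpr}, i.e.\ $\psi(\rho,V)=(\pi_\rho,\mathrm{Ind}_{C(g)}^G V)$. That $\Psi$ is a well-defined additive bijection is the content of Theorem \ref{teoremaaditivo} together with the description from \cite{dpr} identifying the abstract equivalence $U\mapsto U\otimes_{S^\omega(h)}D^\omega(h)$ appearing in its proof with the explicit formula \eqref{Indpr}; since $k=\C$, all modules in sight are semisimple, so passing to Grothendieck rings is harmless and $\Psi$ need not be checked separately on short exact sequences.

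It then remains to prove that $\Psi$ is multiplicative, and this is exactly the chain of natural isomorphisms displayed immediately before the statement: for $E\in R_{\omega_g}(C(g))$ and $F\in R_{\omega_h}(C(h))$ one obtains $\psi(E)\otimes\psi(F)\cong \psi(E\star_\omega F)$, where $\otimes$ is the $\D$-module tensor product coming from the coproduct $\Delta$. The steps to justify, in order, are: (i) the projection formula / Frobenius reciprocity $I^G_{C(g)}(E)\otimes W\cong I^G_{C(g)}(E\otimes R^G_{C(g)}W)$ for a $G$-module $W$; (ii) a Mackey-type decomposition of $R^G_{C(g)}(I^G_{C(h)}F)$, which is where the subgroup $C(g)\cap C(h)$ enters and where, in the twisted setting, the obstruction bundle $\gamma_{g,h}$ of Section 2 (built from the normal-bundle comparison \eqref{hazexceso}) is produced; (iii) transitivity of induction, $I^G_{C(g)}\circ I^{C(g)}_{C(g)\cap C(h)}=I^G_{C(g)\cap C(h)}=I^G_{C(gh)}\circ I^{C(gh)}_{C(g)\cap C(h)}$; and (iv) recognizing the resulting inner expression $I^{C(gh)}_{C(g)\cap C(h)}(R^{C(g)}_{C(g)\cap C(h)}(E)\otimes R^{C(h)}_{C(g)\cap C(h)}(F)\otimes\gamma_{g,h})$ as $E\star_\omega F\in R_{\omega_{gh}}(C(gh))$, which requires matching the twisting cocycles via Proposition 4.3 of \cite{ARZ}, namely $i_g^*(\omega_g)i_h^*(\omega_h)\sim i_{gh}^*(\omega_{gh})$. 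Granting multiplicativity, unitality is automatic: if $\Psi$ is an additive bijection with $\Psi(x\star_\omega y)=\Psi(x)\otimes\Psi(y)$ and the target has unit $\C$ (the trivial $\D$-module of Section 4.1), then $\Psi^{-1}(\C)$ is a two-sided unit for $\star_\omega$; one checks moreover that $\Psi^{-1}(\C)$ is the class of the trivial representation of $G$ in the $(1)$-summand, where $\omega_1$ is cohomologically trivial so $R_{\omega_1}(G)=R(G)$.

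Putting these together, $\Psi$ is a unital bijective ring homomorphism, hence a ring isomorphism $({^{\omega}}K_{orb}([*/G]),\star_\omega)\cong(R(\D),\otimes)$, as asserted; as a byproduct, associativity of $\star_\omega$ is inherited from associativity of $\otimes$ on $\D$-modules, which is \ref{asociativo}. I expect step (ii) to be the real obstacle: one must carry out the double-coset decomposition of the restriction of a DPR-induced module while keeping track of the projective structures, and verify that the cocycle bookkeeping yields precisely the class $\gamma_{g,h}$ and the twisting $i_g^*(\omega_g)i_h^*(\omega_h)$ on $C(g)\cap C(h)$ on the nose, rather than merely up to an unspecified coboundary — it is here that the compatibility of the two twisted structures on $C(g)\cap C(h)$, coming from the $g$-side and the $h$-side, has to be pinned down.
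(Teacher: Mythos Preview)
Your proposal is correct and follows precisely the paper's own argument: define the map via the DPR induction \eqref{Indpr}, invoke Theorem~\ref{teoremaaditivo} for additive bijectivity, and use the chain of Frobenius/Mackey isomorphisms displayed immediately before the proposition to establish multiplicativity. Your elaboration of steps (i)--(iv) and the caution about the cocycle bookkeeping in step (ii) simply unpack what the paper leaves implicit in that chain, but the route is identical.
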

\begin{proof}
We have that the DPR induction defines a morphism $\phi:\bigoplus_{(g)\subset G}R_{\omega}(C(g))\rightarrow R(\D)$. Moreover, we proved above that for $E\in R_{\omega_g}(C(g))$ and $F\in R_{\omega_h}(C(h))$, we have that \[\phi(E)\otimes\phi(F)=\phi(E\star_{\omega}F),\]i.e, it is a ring homomorphism. By Theorem (\ref{teoremaaditivo}), the result follows.
\end{proof}
\begin{coro}\label{asociativo}
The stringy product $\star_{\omega} $ is associative.
\end{coro}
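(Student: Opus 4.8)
The plan is to obtain associativity of $\star_\omega$ as a formal consequence of Proposition \ref{mainresult}, by transporting it from the associativity of the tensor product on the category of $\D$-modules. Recall (see \cite{dpr}) that $\D$ is not merely an algebra equipped with the coproduct $\Delta$ displayed above, but a \emph{quasi}-Hopf algebra: there is an associator $\Phi\in\D\otimes\D\otimes\D$, built from the $3$-cocycle $\omega$, satisfying the pentagon identity together with $(\mathrm{id}\otimes\Delta)\Delta(a)=\Phi\,(\Delta\otimes\mathrm{id})\Delta(a)\,\Phi^{-1}$ for all $a\in\D$. Consequently, for any three $\D$-modules $U,V,W$ the element $\Phi$ induces a $\D$-linear isomorphism $(U\otimes V)\otimes W\xrightarrow{\cong}U\otimes(V\otimes W)$; equivalently, the category of $\D$-modules is monoidal. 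Passing to isomorphism classes and then to the Grothendieck ring, this shows that the multiplication $\otimes$ on $R(\D)$ is associative, i.e. $([U]\otimes[V])\otimes[W]=[U]\otimes([V]\otimes[W])$ for all classes $[U],[V],[W]$.

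With this in hand the corollary is immediate. By Proposition \ref{mainresult} the DPR induction yields a bijection $\phi\colon {^{\omega}}K_{orb}([*/G])\xrightarrow{\cong}R(\D)$ with $\phi(x\star_\omega y)=\phi(x)\otimes\phi(y)$ for all $x,y$. Hence, for arbitrary $x,y,z\in{^{\omega}}K_{orb}([*/G])$,
\[
\phi\bigl((x\star_\omega y)\star_\omega z\bigr)=\bigl(\phi(x)\otimes\phi(y)\bigr)\otimes\phi(z)=\phi(x)\otimes\bigl(\phi(y)\otimes\phi(z)\bigr)=\phi\bigl(x\star_\omega(y\star_\omega z)\bigr),
\]
and injectivity of $\phi$ gives $(x\star_\omega y)\star_\omega z=x\star_\omega(y\star_\omega z)$.

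The only point that deserves attention is the first paragraph: one must be sure that the monoidal structure on $\D$-modules descends to an honest associative operation on $R(\D)$. This amounts to checking that $\Phi$ is invertible in $\D^{\otimes 3}$ — transparent from its explicit formula in terms of $\omega$, since $\omega$ takes values in $S^1$ — and that forming isomorphism classes turns the coherence isomorphisms into genuine equalities in the Grothendieck ring; both are routine given the quasi-Hopf structure of \cite{dpr}. In particular, no direct verification of associativity on ${^{\omega}}K_{orb}([*/G])$ is needed, which would otherwise require a delicate Mackey-type manipulation of induced and restricted twisted representations together with the obstruction bundles $\gamma_{g,h}$, and it is precisely to bypass this that the identification with $R(\D)$ was established.
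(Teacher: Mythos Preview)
Your argument is correct and is exactly the approach the paper takes: the corollary is stated without proof immediately after Proposition~\ref{mainresult}, and the introduction to Section~4 explicitly announces that ``because of the associativity of the tensor product of the $\D$-modules we can obtain a proof of the associativity of the stringy product.'' Your added explanation of why $\otimes$ is associative on $R(\D)$ via the quasi-Hopf associator $\Phi$ is a welcome elaboration of a point the paper leaves implicit, and your observation that the proof of Proposition~\ref{mainresult} uses only that $\phi$ is a multiplicative bijection (not that $\star_\omega$ is already associative) correctly dispels any worry of circularity.
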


\section{Twisted K-theory for an extra special $p$-group}
The goal of this section is to establish a relation between the twisted Orbifold K-theories for the Orbifolds $[*/H]$ and $[*/G]$, where $H$ is an extra special group with exponent $p$, order $p^{2n+1}$ and $G=(\Z_p)^{2n+1}$. For an odd prime number $p$, a $p$-group $H$ is called \emph{extra special} if its center $Z(H)$ is a cyclic group of order $p$, that is $Z(H)\cong \Z_p$, and  $H/Z(H)$ is an elementary abelian group. Any extra special $p$-group has order $p^{2n+1}$ for some $n\in \mathbb{N}$. On the other hand, for any $n$ there exist two extra special groups of order $p^{2n+1}$ such that a group has exponent $p$ and the other group has exponent $p^2$. The motivation for these kind of relations comes from works like \cite{gmn} where these relations are studied for $p=2$ and to some extent by results due to A. Duman  in \cite{duman}. However, there exists a deeper interest to study these kinds of relations by establishing correspondences with the twisted Drinfeld algebras. In particular, we shall remark 
the work done by D. Naidu and D. Nikshych \cite{nn}, which we consider of utmost importance for obtaining the results of this section. In fact, we use the following result due to them (\cite{nn}, Corollary 4.20):
\begin{teor}\label{teorema equivalencia drinfeld}
Let $H$ be a finite group, $\omega'\in Z^3(H;S^1)$ such that 
\begin{itemize}
\item $H$ contains an abelian normal subgroup $K$,
\item $\omega'|_{K\times K\times K}$ is trivial in cohomology (in $H^3(K;S^1))$,
\item there exists a $H$-invariant $2$-cochain $\mu$ over $H$ such that $\delta(\mu)|_{K\times K\times K}=\omega'|_{K\times K\times K}$.
\end{itemize}
Then, there exists a group $G$ and an element $\omega\in Z^2(G;S^1)$ such that $R(\D)\cong R(D^{\omega'}(H))$.
\end{teor}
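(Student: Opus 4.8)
The plan is to trade the twisted Drinfeld doubles for pointed fusion categories and to exploit the categorical Morita invariance of the Drinfeld center. I would start from the standard identification of braided fusion categories $\mathrm{Rep}(D^{\omega'}(H))\simeq Z(\mathrm{Vec}_H^{\omega'})$, where $\mathrm{Vec}_H^{\omega'}$ denotes finite-dimensional $H$-graded vector spaces with associativity constraint twisted by $\omega'$ and $Z(-)$ is the Drinfeld center, together with the theorem (M\"uger; Etingof--Nikshych--Ostrik) that categorically Morita equivalent fusion categories have braided equivalent centers. It then suffices to produce an indecomposable $\mathrm{Vec}_H^{\omega'}$-module category $\mathcal{M}$ whose dual category $(\mathrm{Vec}_H^{\omega'})^{*}_{\mathcal{M}}$ is \emph{pointed}: if so, that dual is equivalent to $\mathrm{Vec}_G^{\omega}$ for $G$ its group of invertible objects and a suitable $3$-cocycle $\omega$, and then $\mathrm{Rep}(D^{\omega}(G))\simeq Z(\mathrm{Vec}_G^{\omega})\simeq Z(\mathrm{Vec}_H^{\omega'})\simeq\mathrm{Rep}(D^{\omega'}(H))$, whence the ring isomorphism $R(\D)\cong R(D^{\omega'}(H))$ follows on passing to Grothendieck rings.

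First I would build the module category. Indecomposable module categories over $\mathrm{Vec}_H^{\omega'}$ are classified by pairs $(K,\mu)$ with $K\le H$ a subgroup and $\mu\in C^2(K;S^1)$ satisfying $\delta\mu=\omega'|_{K\times K\times K}$; such a $\mu$ exists precisely because the second hypothesis says $\omega'|_{K\times K\times K}$ is trivial in $H^3(K;S^1)$, and the third hypothesis moreover allows me to take $\mu$ to be the restriction of an $H$-invariant $2$-cochain on $H$. Writing $\mathcal{M}=\mathcal{M}(K,\mu)$ for the associated module category and $\mathcal{C}:=(\mathrm{Vec}_H^{\omega'})^{*}_{\mathcal{M}(K,\mu)}$ for the corresponding group-theoretical category $\mathcal{C}(H,\omega',K,\mu)$, the invariance of the center recalled above already yields $Z(\mathcal{C})\simeq\mathrm{Rep}(D^{\omega'}(H))$ for \emph{any} admissible $(K,\mu)$, so all the work lies in choosing the pair so that $\mathcal{C}$ becomes pointed.

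The crux --- and the step I expect to be the main obstacle --- is to show that under the remaining hypotheses ($K$ abelian and normal in $H$, and $\mu$ taken $H$-invariant) the category $\mathcal{C}$ is pointed. I would do this by enumerating the simple objects of $\mathcal{C}(H,\omega',K,\mu)$: they are indexed by a double coset $KxK$ together with an irreducible projective representation of the stabilizer $xKx^{-1}\cap K$, the projective twist being a $2$-cocycle assembled from $\omega'$ and $\mu$. Normality of $K$ collapses each double coset to a single coset $xK=Kx$ and makes the stabilizer all of $K$; the $H$-invariance of $\mu$ is precisely what forces the relevant $2$-cocycle on $K$ to be a coboundary, so that, $K$ being abelian, each associated irreducible projective representation is $1$-dimensional. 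Hence every simple object of $\mathcal{C}$ is invertible, $\mathcal{C}\simeq\mathrm{Vec}_G^{\omega}$, and tracking the bookkeeping identifies $G$ as an extension $1\to\widehat{K}\to G\to H/K\to 1$ of $H/K$ by the Pontryagin dual $\widehat{K}$ of $K$, with $\omega$ assembled explicitly from $\omega'$, $\mu$ and a choice of coset representatives. Since this pointedness criterion for group-theoretical categories is exactly the content of Corollary~4.20 of \cite{nn}, one may alternatively just invoke it; the sketch above is meant to indicate why it holds and where $G$ and $\omega$ come from.
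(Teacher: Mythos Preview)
Your sketch is correct and faithfully outlines the Naidu--Nikshych argument; note, however, that the paper does not supply its own proof of this theorem but simply quotes it as \cite[Corollary~4.20]{nn}. Since you yourself identify your argument with that corollary at the end, your proposal and the paper's treatment coincide (you merely unpack what the paper cites), including the identification of $G$ as an extension of $H/K$ by $\widehat{K}$ and the explicit description of $\omega$, which the paper spells out in the proof of the subsequent proposition.
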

From the established relation in the previous Chapter between the twisted Drinfeld's algebras and the twisted Orbifold K-theory, we get the following corollary under the same assumptions as in the last theorem.
\begin{coro}
There exists a ring isomorphism:\[{^{\omega}}K_{orb}([*/G])\cong {^{\omega'}}K_{orb}([*/H]).\]
\end{coro}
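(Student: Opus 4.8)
The plan is to read the corollary as a three-term composition: apply Proposition \ref{mainresult} to \emph{both} Orbifolds to convert the twisted K-theory rings into representation rings of twisted Drinfeld doubles, and then interpolate the two by the Morita-type equivalence of Theorem \ref{teorema equivalencia drinfeld}. In other words, the entire content is already packaged in Proposition \ref{mainresult} and the cited result of Naidu--Nikshych; there is nothing new to compute, only a matching of isomorphisms to be made.

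Concretely I would proceed as follows. First, since the hypotheses of Theorem \ref{teorema equivalencia drinfeld} are in force for the pair $(H,\omega')$ (with $K$ an abelian normal subgroup, $\omega'|_{K\times K\times K}$ trivial in $H^3(K;S^1)$, and an $H$-invariant $2$-cochain $\mu$ with $\delta(\mu)|_{K\times K\times K}=\omega'|_{K\times K\times K}$), that theorem produces a finite group $G$ and a cocycle $\omega$ on $G$ together with an isomorphism $R(D^{\omega}(G))\cong R(D^{\omega'}(H))$; in the situation of this section one takes $G=(\Z_p)^{2n+1}$. Second, I apply Proposition \ref{mainresult} to $G$ with the twisting $\omega$, obtaining the ring isomorphism
\[
({^{\omega}}K_{orb}([*/G]),\star_{\omega})\;\cong\;(R(D^{\omega}(G)),\otimes).
\]
Third, I apply Proposition \ref{mainresult} once more, now to $H$ with the twisting $\omega'$, to get
\[
({^{\omega'}}K_{orb}([*/H]),\star_{\omega'})\;\cong\;(R(D^{\omega'}(H)),\otimes).
\]
Composing the three isomorphisms
\[
{^{\omega}}K_{orb}([*/G])\;\cong\;R(D^{\omega}(G))\;\cong\;R(D^{\omega'}(H))\;\cong\;{^{\omega'}}K_{orb}([*/H])
\]
yields the asserted ring isomorphism.

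The only point requiring genuine care --- and hence the ``main obstacle'' --- is to be sure that the middle isomorphism is \emph{multiplicative}, i.e.\ that the Naidu--Nikshych statement is realized by a (braided) tensor equivalence $\mathrm{Rep}(D^{\omega}(G))\simeq\mathrm{Rep}(D^{\omega'}(H))$, so that it descends to a ring isomorphism of Grothendieck rings rather than a mere additive matching of irreducible characters (this is exactly the upgrade of the additive Theorem \ref{teoremaaditivo} to the ring level). A secondary bookkeeping check is that the cocycle $\omega$ and group $G$ delivered by Theorem \ref{teorema equivalencia drinfeld} really do coincide (up to the coboundary ambiguity handled by the isomorphism $\upsilon^{*}$ above) with the elementary abelian $G=(\Z_p)^{2n+1}$ and the twisting used on the left-hand side, so that the statement is about the intended Orbifold $[*/(\Z_p)^{2n+1}]$. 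Once these two conventions are pinned down, the corollary is immediate.
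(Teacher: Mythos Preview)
Your proposal is correct and matches the paper's (implicit) argument exactly: the paper simply says ``from the established relation in the previous chapter between the twisted Drinfeld algebras and the twisted Orbifold K-theory, we get the following corollary under the same assumptions as in the last theorem,'' i.e.\ compose Proposition~\ref{mainresult} applied to each side with Theorem~\ref{teorema equivalencia drinfeld}. Two small remarks: (i) your concern about multiplicativity of the middle map is already absorbed into the paper's formulation of Theorem~\ref{teorema equivalencia drinfeld} as a ring isomorphism $R(D^{\omega}(G))\cong R(D^{\omega'}(H))$, so no extra check is needed here; (ii) the identification $G=(\Z_p)^{2n+1}$ is not part of this corollary --- that specialization is the content of the subsequent Proposition, so drop that clause from your proof of the corollary itself.
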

Now, we follow with a nice application of this result.
\begin{prop}
Let $H$ be an extra special group with order $p^{2n+1}$ and exponent $p$. Then
\[K_{orb}([*/H])\cong {^{\omega}}K_{orb}([*/(\Z_p)^{2n+1}])\] for some non trivial twisting $\omega$.
\end{prop}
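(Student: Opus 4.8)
The plan is to obtain this statement as a direct application of the Corollary preceding it, applied to $H$ equipped with the \emph{trivial} twisting $\omega'=1\in Z^{3}(H;S^{1})$, so that ${^{\omega'}}K_{orb}([*/H])=K_{orb}([*/H])$. First I would verify that the hypotheses of Theorem~\ref{teorema equivalencia drinfeld} hold for a suitable abelian normal subgroup $K$. Since $H$ is extra special we have $Z(H)\cong\Z_{p}$ and $H/Z(H)$ elementary abelian; taking $K$ to be a maximal abelian subgroup containing $Z(H)$ (so $|K|=p^{n+1}$), the quotient $K/Z(H)$ is a subgroup of the abelian group $H/Z(H)$, hence $K$ is normal in $H$. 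As $\omega'=1$, the restriction $\omega'|_{K\times K\times K}$ is the constant cochain and so is trivial in $H^{3}(K;S^{1})$, and the constant $2$-cochain $\mu\equiv 1$ is $H$-invariant with $\delta(\mu)|_{K\times K\times K}=1=\omega'|_{K\times K\times K}$. Thus all three conditions are satisfied, and the Corollary produces a finite group $G$, a twisting $\omega$ on $G$, and a ring isomorphism ${^{\omega}}K_{orb}([*/G])\cong K_{orb}([*/H])$.

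The substantive step is the identification of $G$ with $(\Z_{p})^{2n+1}$. For this I would trace through the Naidu--Nikshych construction underlying Theorem~\ref{teorema equivalencia drinfeld}: with the choice $\mu\equiv 1$, the group it outputs is the untwisted product $K\times(H/K)$, while $\omega$ is obtained by transgressing the class of the extension $1\to K\to H\to H/K\to 1$. Because $H$ has exponent $p$, both $K$ and $H/K$ are elementary abelian, of ranks $n+1$ and $n$, whence $G\cong(\Z_{p})^{n+1}\times(\Z_{p})^{n}=(\Z_{p})^{2n+1}$. (One can corroborate $|G|=p^{2n+1}$ without the explicit construction, since $\dim_{\C}D^{\omega}(G)=|G|^{2}=|H|^{2}=\dim_{\C}D(H)$.) This is where the real work lies, as it requires unwinding the braided equivalence of Naidu--Nikshych rather than merely invoking it.

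It remains to check that the twisting $\omega$ is non-trivial. If $\omega$ were a coboundary, then by the isomorphism $\upsilon^{*}$ of Section~4 together with Proposition~\ref{mainresult} one would obtain ring isomorphisms $R(D(G))\cong R(D^{\omega}(G))\cong {^{\omega}}K_{orb}([*/G])\cong K_{orb}([*/H])\cong R(D(H))$; in particular $D(G)$ and $D(H)$ would have the same number of isomorphism classes of simple modules, this number being the rank of the corresponding Grothendieck ring. But for $G$ abelian of order $p^{2n+1}$ the double $D(G)$ has exactly $|G|^{2}=p^{4n+2}$ such classes, whereas for the non-abelian $H$ the double $D(H)$ has strictly fewer: the number of simple $D(H)$-modules equals the number of $H$-orbits of commuting pairs, and this is smaller than $|H|^{2}$ because, for instance, a pair $(a,1)$ with $a\notin Z(H)$ already has orbit of size equal to the conjugacy class of $a$, which exceeds $1$. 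This contradiction forces $\omega$ to be non-trivial, completing the proof. The only genuine obstacle is the identification of $G$; checking the hypotheses and establishing the non-triviality of $\omega$ is routine once the results of the previous sections are in hand.
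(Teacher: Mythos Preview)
Your strategy is sound and ultimately correct, but it diverges from the paper's proof in two places, and there is one small slip worth flagging.

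\textbf{Choice of $K$ and identification of $G$.} The paper takes $K=Z(H)\cong\Z_p$, not a maximal abelian subgroup. With $\omega'=1$ one may choose $\mu\equiv 1$; then the Naidu--Nikshych recipe (recalled explicitly in the paper) builds a class $\nu\in H^{2}(H/K;\hat K)$ out of $\mu$, and with $\mu$ trivial one can take all auxiliary cochains $\eta_y\equiv 1$, so $\nu$ is trivial and the output group is $G=\hat K\times_{\nu}H/K=\hat K\times H/K\cong\Z_p\times(\Z_p)^{2n}$. Note that the construction produces $\hat K$, not $K$; in your setup $K$ is elementary abelian so $\hat K\cong K$ and the slip is harmless, but your formula $G=K\times(H/K)$ is not literally what the machinery gives. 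Your maximal-abelian choice would also work and yields the same $G$, but you defer the actual unwinding of the construction, whereas the paper carries it out (with the simpler $K=Z(H)$) rather than leaving it as ``the real work''.

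\textbf{Nontriviality of $\omega$.} Here your argument is genuinely different and arguably cleaner. The paper writes down the explicit cocycle: with $\nu$ trivial, $\omega((\rho_1,x_1),(\rho_2,x_2),(\rho_3,x_3))=\rho_1(k_{x_2,x_3})$, where $k_{\cdot,\cdot}\in K$ is the $2$-cocycle of the central extension $1\to K\to H\to H/K\to 1$, and then exhibits a triple on which this is $\neq 1$ (possible exactly because $H$ is non-abelian). Your rank-counting argument avoids any cocycle manipulation: the rank of $R(D(H))$ is the number of $H$-orbits of commuting pairs, which is at most the number of commuting pairs $|H|\cdot k(H)<|H|^{2}$ for non-abelian $H$, while $R(D(G))$ has rank $|G|^{2}=|H|^{2}$. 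Both arguments are valid; the paper's buys an explicit description of $[\omega]$ in terms of the extension class of $H$, while yours is shorter and conceptual.
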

\begin{proof}
Let $H$ be an extra special group. From definition we may assume $K=Z(H)\cong \Z_p$. Now, suppose there exists $\mu\in C^2(H;S^1)$ such that $\delta(\mu)|_{K\times K\times K}=\omega'|_{K\times K\times K}$, which is $H$-invariant, that is, if we take the action of $H$ on the $2$-cochains $C^2(H;S^1)$ defined by ${^{y}}\mu:=\mu(yx_1y^{-1},yx_2y^{-1})$, then ${^{y}}\mu=\mu$ in $C^2(H;S^1)$ for all $y\in H$. Now, since $K=Z(H)$, then it follows that ${^{y}}\mu|_K=\mu|_K$ for all $y\in H$. Thus, for all $y\in H$ there exists a $1$-chain $\eta_y$ on $H$ such that $\delta\eta_y=\frac{{^{y}\mu}}{\mu}=1$. Since $K$ is abelian, we can define the following  map:
\begin{align*}
\nu:&H/K\times H/K\rightarrow C^1(H;S^1)\\
&(y_1,y_1)\mapsto\frac{{^{y_2}}\eta_{y_1}\eta_{y_2}}{\eta_{y_1y_2}}.
\end{align*}
\begin{lema}[\cite{n}, Lemma 4.2, Corollary 4.3]
The function $\nu$ defines an element in $H^2(H/K;\hat{K}))$. 
\end{lema}

However, this element represents a short exact sequence:\[1\rightarrow \hat{K}\rightarrow \hat{K}\times_{\nu}H/K\rightarrow H/K\rightarrow 1,\] where the product in $\hat{K}\times_vH/K$ is defined by the formula \[(\rho_1,x_1)(\rho_2,x_2):=(\nu(x_1,x_2)\rho_1\rho_2,x_1x_2).\]
Now, the element $\omega\in Z^3(G;S^1)$ with $G:=\hat{K}\times_{\nu}H/K$ for all $(\rho_1,x_1),(\rho_2,x_2),(\rho_3,x_3)\in\hat{K}\times_{\nu}H/K$ is defined by the formula:
\[\omega((\rho_1,x_1)(\rho_2,x_2)(\rho_3,x_3)):=(\nu(x_1,x_2)(u(x_3)))(1)\rho_1(k_{x_2,x_3}),\] where $u:H/K\rightarrow H$ is a function such that, when it is composed with the projection $p:H\rightarrow H/K$, it follows that $p(u(x))=x$ and $k_{x_2,x_3}\in H$ is an element that satisfies $u(x_1)u(x_2)=k_{x_1,x_2}u(p(u(x_1)u(x_2)))$. \\
Clearly, when $\omega'$ is the trivial $3$-cocycle, we can choose $\mu$ to be trivial and so $\nu$ is also trivial. By definition of an extra special group, $H/K$ is an elementary abelian group and if $\nu$ is trivial, it follows easily that $\hat{K}\times_{\nu}H/K\cong (\Z_p)^{2n+1}$. It remains to show that $\omega$ is non trivial in $H^3(G;S^1)$. Take $H=\{h_1,\dots,h_{p^{2n+1}}\}$, $K=Z(H)=\{z_1,\dots,z_p\}$ and $\hat{K}=\{\rho_1,\dots,\rho_p\}$, with $\rho_i$ non trivial for $i\neq 1$. Denote the quotient group $H/K=\{x_1K,\dots, x_{p^{2n}}K\}$, with $x_1=1_{H/K}$. Now, we define the function $u:H/K\rightarrow H$ such that $u(x_iK)=x_i(z_i^{-1})$ for $x_iK\in H/K$, $z_j\in K$. Consider the element $((\rho,x_iK),(\rho,x_iK),(\rho,x_iK))$ with $\rho\in \hat{K}$ fixed and non trivial. Since $\nu$ is trivial, the element $\omega$ is reduced to $\rho(k_{x_iK,x_iK})=\rho(z_i)\neq 1$, which implies that $\omega$ is non trivial.\end{proof}
 
\subsection{Twisted Orbifold K-theory for the Orbifold $[*/(\Z_p)^n]$}
With the above result, to calculate the Orbifold K-theory structure for $[*/H]$, with $H$ an extra-special $p$-group, we only have to calculate the twisted Orbifold K-theory for $[*/(\Z_p)^n]$ and a twist element in $H^3((\Z_p)^n;S^1)$, following the constructions presented in Section 3. Because all those constructions are based on the inverse transgression map, we proceed to give an explicit way of calculating it. Afterwards, we shall present an example with a particular twist element, that has no trivial inverse transgression map. 
\subsubsection{Inverse transgression map for the group $(\Z_p)^n$}
Let us consider the following commutative diagram given by two natural short exact sequences:
\begin{equation}
\begin{tabular}{cccccccccccc}
$0$&$\rightarrow$&$\Z$&$\stackrel{\times p}{\rightarrow}$&$\Z$&$\stackrel{\pi}{\rightarrow}$&$\Z_p$&$\rightarrow$&$0$\\
&&$\downarrow$&&$\downarrow$&&$\downarrow$&&\\
$0$&$\rightarrow$&$\Z_p$&$\stackrel{\times p}{\rightarrow}$&$\Z_{p^2}$&$\stackrel{\tau}{\rightarrow}$&$\Z_p$&$\rightarrow$&$0$\\
\end{tabular}
\end{equation}
where the applications $\pi$ and $\tau$ are the natural projections. Idem for the downarrow applications. These two exact sequences in the diagram above induce long exact sequences:
\begin{equation}\label{suclar}
\cdots\rightarrow H^{k-1}(BG;\Z_p)\stackrel{\partial}{\rightarrow}H^k(BG;\Z)\stackrel{(\times p)_*}{\rightarrow}H^k(BG;\Z)\stackrel{(\pi)_*}{\rightarrow}H^{k}(BG;\Z_p)\stackrel{\partial}{\rightarrow}H^{k+1}(BG;\Z)\rightarrow\cdots
\end{equation}
and
\begin{equation}\label{suclarb}
\cdots\rightarrow H^{k-1}(BG;\Z_p)\stackrel{\beta}{\rightarrow}H^k(BG;\Z_p)\stackrel{(\times p)_*}{\rightarrow}H^k(BG;\Z_{p^2})\stackrel{(\tau)_*}{\rightarrow}H^{k}(BG;\Z_p)\stackrel{\beta}{\rightarrow}H^{k+1}(BG;\Z_p)\rightarrow\cdots
\end{equation}
 \begin{nota}\label{nota1}The connection morphism $\beta$ of the long exact sequence (\ref{suclarb}) is known as the Bockstein map. Such a morphism induces an application $\beta:H^*(BG;\Z_p)\rightarrow H^*(BG;\Z_p)$ which has the multiplicative property: \[\beta(xy)=\beta(x)y+(-1)^{deg(x)}x\beta(y).\]
\end{nota}
Since $G$ is a $p$-group, $H^k(BG;-)$ is also a $p$-group and this implies that the morphism $(\times p)_*$ in the long exact sequences (\ref{suclar}) and (\ref{suclarb}) is the zero map. Thus, $\pi_*$ and $\tau_*$ are injective maps and $H^k(BG;\Z)\cong H^k(BG;\Z_{p^2})$. On the other hand, by the exactness of the sequence (\ref{suclarb}), $H^k(BG;\Z_{p^2})\cong Ker(\beta:H^k(BG;\Z_p)\rightarrow H^{k+1}(BG;\Z_p))$ and then \[H^k(BG;\Z)\cong Ker(\beta:H^k(BG;\Z_p)\rightarrow H^{k+1}(BG;\Z_p)).\] 
\subsubsection{Relation to the inverse transgression map}
By definition, the inverse transgression map $\tau_g$ is an application defined between the groups $H^k(BG;\Z)$ and $H^{k-1}(BC_G(g);\Z)$. Since $G=(\Z_p)^n$ is an abelian group, we have that the inverse transgression map can be factorized as \small \begin{equation*}\tilde{\tau}_g:Ker(\beta:H^k(BG;\Z_p)\rightarrow H^{k+1}(BG;\Z_p))\rightarrow Ker(\beta:H^{k-1}(BG;\Z_p)\rightarrow H^{k}(BG;\Z_p)).\end{equation*}\normalsize
Consider the cohomology ring $H^*(BG;\Z_p)\cong\F_p[x_1,\dots,x_n]\otimes\Lambda[y_1,\dots,y_n]$ with $|x_i|=2$ and $|y_i|=1$ for $i=1,\dots,n$. By the calculations above we need to find a polynomial $p(x_1,\dots,x_n,y_1,\dots,y_n)\in \F_p[x_1,\dots,x_n]\otimes\Lambda[y_1,\dots,y_n]$ of degree $k$, such that $\beta(p)=0$ and $\tilde{\tau}_g(p)\neq 0$ for some $g\in G$.\\
To obtain the desired polynomial, first we do the calculation of the inverse transgresion map. Take an element $g=(a_1,\dots,a_n)\in G$ and consider the map:
\[G\times\Z\rightarrow G\times \left\langle g\right\rangle\rightarrow G\] defined by \[(h,m)\mapsto (h,g^m)\mapsto hg^m.\]
At the level of cohomology we get:
\begin{equation}
\begin{tabular}{ccccc}
$H^*(BG;\F_p)$&$\rightarrow$&$ H^*(BG\times B(\Z_p);\F_p)$&$\rightarrow$&$ H^*(BG\times B\Z;\F_p)$\\
$x_i$&$\mapsto$&$ x_i+a_iw$&$\mapsto$&$ x_i$\\
$y_i$&$\mapsto$&$ y_i+a_iz$&$\mapsto$&$ y_i+a_iz$
\end{tabular}
\end{equation}
where \[H^*(BG;\F_p)=\F_p[x_1,\dots,x_n]\otimes\Lambda[y_1,\dots,y_n],\] \[H^*(BG\times B(\Z_p);\F_p)=\F_p[x_1,\dots,x_n,w]\otimes\Lambda[y_1,\dots,y_n,z]\]and\[H^*(BG\times B\Z;\F_p)=\F_p[x_1,\dots,x_n]\otimes\Lambda[y_1,\dots,y_n,z].\]
Now, for the products $x_iy_j,x_ix_j, y_iy_j\in H^*(BG;\F_p)$ we can obtain the calculation of the inverse transgression maps. For the first product $x_iy_j$ we get
\footnotesize
\begin{equation}
\begin{aligned}
(x_iy_j)&\mapsto(x_i+a_iw)(y_j+a_iz)=x_iy_j+x_ia_jz+a_iwy_j+a_ia_jwz&\mbox{  in  } H^*(BG\times B(\Z_p);\F_p)\\
&\mapsto x_iy_j+x_ia_jz&\mbox{  in  } H^*(BG\times B\Z;\F_p).
\end{aligned}
\end{equation}
\normalsize
and from definition (\ref{defitran}) it follows that $\tilde{\tau}_g(x_iy_j)=x_ia_j$.
For the second product $x_ix_j$ we get
\begin{equation}\label{invertrans1}
\begin{aligned}
(x_ix_j)&\mapsto(x_i+a_iw)(x_j+a_iw)=x_ix_j+x_ia_jw+a_iwx_j&\mbox{  in  } H^*(BG\times B(\Z_p);\F_p)\\
&\mapsto x_ix_j&\mbox{  in  } H^*(BG\times B\Z;\F_p).
\end{aligned}
\end{equation}
and then $\tilde{\tau}_g(x_ix_j)=0$. Finally for the product $y_iy_j$ we get
\begin{equation}
\begin{aligned}
(y_iy_j)&\mapsto(y_i+a_iz)(y_j+a_iz)=y_iy_j+y_ia_jz+a_izy_j&\mbox{  in  } H^*(BG\times B(\Z_p);\F_p)\\
&\mapsto y_iy_j+(a_jy_i-a_iy_j)z&\mbox{  in  } H^*(BG\times B\Z;\F_p).
\end{aligned}
\end{equation}
and then $\tilde{\tau}_g(y_iy_j)=(a_jy_i-a_iy_j)$.\\
Since we are interested in calculating the inverse transgression map for elements $\alpha\in H^4(G;\Z)$, we consider only polynomials 
of degree $4$ in \[H^*(BG;\F_p)=\F_p[x_1,\dots,x_n]\otimes\Lambda[y_1,\dots,y_n].\]Now, we present some examples of the inverse transgression map. It is easier to consider the cases $n=2$ and $n=3$. In the first case the inverse transgression map is a trivial map. In the latter the inverse transgression map is more interesting.  
\begin{example}
\begin{itemize}
\item $n=2$. For $p\neq 2$ we have that $H^*(BG;\F_p)=\F_p[x_1,x_2]\otimes\Lambda[y_1,y_2]$ with $|y_i|=1$ and $|\beta{y_i}|=|x_i|=2$. Thus, we can just consider linear combinations of the polynomials $p_1(x_1,x_2,y_1,y_2)=x_1x_2$, $p_2(x_1,x_2,y_1,y_2)=x_1y_1y_2$ and $p_3(x_1,x_2,y_1,y_2)=x_2y_1y_2$. For $p_1$ the calculations of the equation (\ref{invertrans1}) show that $\tilde{\tau}_g(p_1)=0$. Then we need to find an element $p$, which will be a ($\Z_p$)-linear combination of the polynomials $p_2$ and $p_3$, such that $\beta(p)=0$. Namely, \[\beta(p_3)=x_2(\beta(y_1)y_2-y_1\beta(y_2))=x_2(x_1y_2-y_1x_2)\]  and \[\beta(p_2)=x_1(\beta(y_1)y_2-y_1\beta(y_2))=x_1(x_1y_2-y_1x_2).\]Therefore, there does not exists such a ($\Z_p$)-linear combination. 
\item $n=3$. By analizing the degree of the polynomials, we obtain the element:
\begin{equation}\label{polinomio}
p(x_1,x_2,x_3,y_1,y_2,y_3)=x_1y_2y_3-x_2y_1y_3+x_3y_1y_2,
\end{equation}
which satisfy the condition $\beta(p)=0$. In order to check this, we use the property of $\beta$ noted in the Remark \ref{nota1}. 
\begin{equation}
\begin{aligned}
\beta(p)&=\beta(x_1y_2y_3)-\beta(x_2y_1y_3)+\beta(x_3y_1y_2)\\
&=\beta(x_1)y_2y_3+x_1\beta(y_2y_3)-\beta(x_2)y_1y_3-x_2\beta(y_1y_3)+\beta(x_3)y_1y_2+x_3\beta(y_1y_2)\\
&=x_1\beta(y_2)y_3-x_1y_2\beta(y_3)-x_2\beta(y_1)y_3+x_2y_1\beta(y_3)+x_3\beta(y_1)y_2-x_3y_1\beta(y_2)\\
&=x_1x_2y_3-x_1y_2x_3-x_2x_1y_3+x_2y_1x_3+x_3x_1y_2-x_3y_1x_2\\
&=0.\end{aligned}
\end{equation}
The inverse transgression map for an element $g=(a_1,a_2,a_3)\in (\Z_p)^3$ evaluated in the polynomial $p$ gives:
\begin{equation}\label{ejemplo}
\begin{aligned}
\tau_g(p)&=\tau_g(x_1y_2y_3)-\tau_g(x_2y_1y_3)+\tau_g(x_3y_1y_2)\\
&=x_1(a_3y_2-a_2y_3)-x_2(a_3y_1-a_1y_3)+x_3(a_2y_1-a_1y_2)\\
&=a_1(x_2y_3-x_3y_2)+a_2(x_3y_1-x_1y_3)+a_3(x_1y_2-x_2y_1)
\end{aligned}
\end{equation}
\begin{lema}\label{lemadobletrans}
Let $g=(a_1,a_2,a_3)$ and $h=(b_1,b_2,b_3)$ be elements in $G=(\Z_p)^3$. The double inverse transgression map of $p$ is equal to\[\tau_h\tau_g(p)=[(a_1,a_2,a_3)\times (b_1,b_2,b_3)]\cdot(x_1,x_2,x_3).\]
\end{lema}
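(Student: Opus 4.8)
The plan is to obtain $\tau_h\tau_g(p)$ by feeding the explicit formula for $\tau_g(p)$ from (\ref{ejemplo}) back into the inverse transgression machinery. Since $G=(\Z_p)^3$ is abelian we have $C_G(x)=G$ for every $x\in G$, so $\tau_g$ and $\tau_h$ are both maps $H^k(BG;\Z)\to H^{k-1}(BG;\Z)$ and the composite $\tau_h\tau_g$ is meaningful; under the identification $H^{k}(BG;\Z)\cong\ker\bigl(\beta\colon H^k(BG;\F_p)\to H^{k+1}(BG;\F_p)\bigr)$ used above, what must be checked for the iteration to be legitimate is that $\tau_g(p)$ indeed lies in $\ker(\beta\colon H^3\to H^4)$, and this is automatic because the factored map $\tilde\tau_g$ has exactly this kernel as target. (Concretely $\beta(x_2y_3-x_3y_2)=x_2x_3-x_3x_2=0$, and likewise for the other two summands of (\ref{ejemplo}).)

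First I would expand (\ref{ejemplo}) as an $\F_p$-linear combination of the monomials $x_iy_j$, namely
\[
\tau_g(p)=a_1x_2y_3-a_1x_3y_2+a_2x_3y_1-a_2x_1y_3+a_3x_1y_2-a_3x_2y_1.
\]
Then, using that $\tilde\tau_h$ is $\F_p$-linear together with the already computed value $\tilde\tau_h(x_iy_j)=x_ib_j$ — note that no $x_ix_j$ nor $y_iy_jy_k$ monomial appears in $\tau_g(p)$, so neither (\ref{invertrans1}) nor the formula for $\tilde\tau_g(y_iy_j)$ is needed here — one applies $\tau_h$ term by term to get
\[
\tau_h\tau_g(p)=a_1b_3x_2-a_1b_2x_3+a_2b_1x_3-a_2b_3x_1+a_3b_2x_1-a_3b_1x_2.
\]
Finally, collecting the coefficients of $x_1,x_2,x_3$ yields $(a_3b_2-a_2b_3)x_1+(a_1b_3-a_3b_1)x_2+(a_2b_1-a_1b_2)x_3$, which is exactly the dot product of the cross product $(a_1,a_2,a_3)\times(b_1,b_2,b_3)$ with $(x_1,x_2,x_3)$ once the orientation convention for the cross product is pinned down.

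There is no real obstacle here: the argument is a short bookkeeping computation resting on the two degree-lowering formulas already established. The only two points deserving a moment's care are (i) verifying, as above, that $\tau_g(p)$ remains in the Bockstein kernel so that $\tau_h$ may legitimately be applied to it, and (ii) being consistent about the signs produced by the exterior algebra $\Lambda[y_1,y_2,y_3]$ — both when expanding $p$ and $\tau_g(p)$ and when identifying the resulting $3$-vector with a cross product — so that the final identity comes out with the sign asserted in the statement rather than its negative.
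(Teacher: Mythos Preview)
The paper states this lemma without proof, so there is nothing to compare against; your direct computation---expanding (\ref{ejemplo}) and applying the formula $\tilde\tau_h(x_iy_j)=b_jx_i$ term by term---is exactly the intended argument and is correct. Your caveat (ii) is well placed: with the standard right-handed convention $(a\times b)_1=a_2b_3-a_3b_2$, the result of your computation is literally $[(b_1,b_2,b_3)\times(a_1,a_2,a_3)]\cdot(x_1,x_2,x_3)$, i.e.\ the negative of the displayed formula, so the lemma as stated is only correct up to the orientation convention you flag.
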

\end{itemize}
\end{example}
\begin{nota}
We wish to point out that the latter example with $n=3$ shows that for $n\geq 3$ the inverse transgression map is a non-trivial map. We can always consider the element $p(x_1,\dots,x_n,y_1,\dots,y_n)=x_iy_jy_k-x_jy_iy_k+x_ky_iy_j$ to be in $H^4((Z_p)^n;\Z)$ . By similar calculations as in the equation (\ref{ejemplo}), we can prove that $\beta(p)=0$ while $\tau_g(p)\neq 0$ for $g\in (\Z_p)^n$.
\end{nota}
By using the inverse transgression map for the group $(\Z_p)^n$ presented above and by the decomposition formula presented in Theorem 3.6 in \cite{bu}, we can calculate the explicit structure of the twisted Orbifold K-theory for the Orbifold $[*/(\Z_p)^3]$ and the twist element $\alpha$ as the element in $H^3((\Z_p)^3;S^1)$ associated to the polynomial define in equation (\ref{polinomio}) via the isomorphism $H^3((\Z_p)^3;S^1)\cong H^4((\Z_p)^3;\Z)$. Note that in this case:\[{^{\alpha}}K_{orb}([*/(\Z_p)^3])=\bigoplus_{g\in(\Z_p)^3}{^{\alpha_g}}K_{(\Z_p)^3}(*)\cong\bigoplus_{g\in(\Z_p)^3}R_{\alpha_g}((\Z_p)^3).\]Now, for each $g\in \Z_p$, the decomposition formula implies:
\[R_{\alpha_g}((\Z_p)^3)\otimes \Q\cong\prod_{g,h \in (\Z_p)^3} \left(\Q(\zeta_{p})_{h,\alpha_g}\right)^{(\Z_p)^3},\]where $\zeta_p$ is a $p$-root of the unity. Note that the action of $(\Z_p)^3$ on $\Q(\zeta_{p})_{h,\alpha_g}$ is to multiply by the double inverse transgression map evaluated on $k\in (\Z_p)^3$, $\tau_h(\alpha_g)(k)$ and by the Lemma (\ref{lemadobletrans}) we get
\begin{equation}\left(\Q(\zeta_{p})_{h,\alpha_g}\right)^{(\Z_p)^3}=\begin{cases} \Q(\zeta_p) & \text{if $g=\lambda h$, $\lambda \in \Z_p$,}
\\
0 &\text{else.}
\end{cases}\end{equation}So, for $h\neq 0$, we have that\[R_{\alpha_g}((\Z_p)^3)\otimes \Q=\prod_{\lambda\in \Z_p}{\Q(\zeta_p)},\]while for $g=0$, we get
\[R_{\alpha_1}((\Z_p)^3)\otimes \Q=\prod_{\lambda\in (\Z_p)^3}{\Q(\zeta_p)}.\]
Then, the module the twisted Orbifold K-theory for the Orbifold $[*/(\Z_p)^3]$ turns out to be:
\[{^{\alpha}}K_{orb}([*/(\Z_p)^3])\otimes \Q=\prod_{\lambda\in \Z_p}{\Q(\zeta_p)}\bigoplus \prod_{\lambda\in (\Z_p)^3}{\Q(\zeta_p)} \]and the product structure is defined via the product of the elements in $\Q(\zeta_p)$.
\section{Final remarks}
With the result presented in Section 4 about the Grothendieck ring associated to the semi-group of representations of the twisted Drinfeld double $\D$ and the twisted Orbifold K-theory, we found a nice relation between two structures coming from different sources. As we already said, the Orbifold $[*/G]$ is a particular case of a more general kind of Orbifolds obtained by the almost free action of a compact Lie group $G$ on a compact manifold $M$. With a little more structure, the stringy product introduced in Section 3 can be extended to a stringy product on the module ${^{\alpha}}K_{orb}([M/G])$ (in the same way as in \cite{bu}), where $[M/G]$ denotes the Orbifold structure obtained by the almost free action (see \cite{AR} for the details of this structure). Therefore, under suitable hypoteses we can think about the twisted Orbifold K-theory ${^{\omega}}K_{orb}([M/G])$ as a more general object which coincides with the Grothendieck ring $R(\D)$ if $G$ is a finite group and $M=\{*\}$. Nevertheless, we shall explore the interpretation and consequences of this more general object. Next, we focus our atention on the 
results obtained in Section 5, where we establish an explicit relation between the twisted Orbifold K-theories of the Orbifolds $[*/H]$ and $[*/(\Z_p)^n]$, where $H$ is a particular extra special $p$-group. In the same spirit, we look for some general relation between the twisted Orbifold K-theories ${^{\alpha}}K_{orb}([M/G])$ and ${^{\beta}}K_{orb}([M/K])$ of the Orbifolds $[M/G]$ and $[M/K]$, for suitable twistings $\alpha\in H^3(G;S^1)$ and $\beta\in H^3(H;S^1)$, and appropriate actions of the finite groups $G$ and $K$ on a compact manifold $M$. In the same way, we hope that some analogous results may be obtained if $G$ and $K$ are compact Lie groups acting almost freely on a compact manifold $M$. By our preliminary observations, in order to obtain such results, some hypothesis on the almost free actions of the compact Lie groups $G$ and $K$ must be added.
\bibliographystyle{amsplain}
\bibliography{biblio3}

\providecommand{\bysame}{\leavevmode\hbox to3em{\hrulefill}\thinspace}
\providecommand{\MR}{\relax\ifhmode\unskip\space\fi MR }
\providecommand{\MRhref}[2]{%
  \href{http://www.ams.org/mathscinet-getitem?mr=#1}{#2}
}
\providecommand{\href}[2]{#2}
\begin{thebibliography}{10}

\bibitem{AR}
{A}. {A}dem and {Y}. {R}uan, \emph{{T}wisted orbifold k-theory}, {C}omm.
  {M}ath. {P}hys.\textbf{ 237}, 3, 533-556. {MR} 2004e:19004, (2003).

\bibitem{ARZ}
{A}. {A}dem, {Y}. {R}uan, and {B}. {Z}hang, \emph{{A} stringy product on
  twisted obifold {K}-theory}, {M}orfismos \textbf{11}, no. 2, 2007
  ({D}{\'e}cimo {A}niversario).

\bibitem{bu}
{E}. {B}ecerra and {B}. {U}ribe, \emph{{S}tringy {O}ribifold product in
  {K}-theory for abelian global quotients}, {T}ransaction of the {AMS}, (2009).

\bibitem{carey2006}
Alan~L. Carey and Bai-Ling Wang, \emph{Thom isomorphism and push-forward map in
  twisted {$K$}-theory}, J. K-Theory \textbf{1} (2008), no.~2, 357--393.
  \MR{2434190 (2009g:55005)}

\bibitem{dpr}
{R}. {D}ijkgraaf, {V}. {P}asquier, and {P}. {R}oche, \emph{{Q}uasi-{H}opf
  algebras, gorup cohomology and {O}rbifolds models}, {N}uclear {P}hysics
  {B},\textbf{18B},60-72.

\bibitem{drin}
{V}. {D}rinfeld, \emph{{Q}uantum groups}, {P}roc. {I}nt. {C}ongr. {M}ath at
  {B}erkeley. {A}mer. {M}ath. {S}oc. 798-820.

\bibitem{gmn}
{C}. {G}off, {G}. {M}ason, and {S}. {N}g, \emph{{O}n the {G}auge {E}quivalence
  of {T}wisted {Q}uantum {D}oubles of {E}lementary {A}belian and
  {E}xtra-{S}pecial 2-{G}roups}, {J}ournal of {A}lgebra, 312 (2007), no. 2,
  849--875.

\bibitem{Karpi}
Gregory Karpilovsky, \emph{Group representations. {V}ol. 2}, North-Holland
  Mathematics Studies, vol. 177, North-Holland Publishing Co., Amsterdam, 1993.
  \MR{1215935 (94f:20001)}

\bibitem{duman}
Ali Nabi~Duman, \emph{The fusion algebra of an extraspecial {$p$}-group}, J.
  Math. Phys. \textbf{50} (2009), no.~8, 082301, 8. \MR{2554411 (2010h:20023)}

\bibitem{n}
{D}. {N}aidu, \emph{{C}ategorical {M}orita equivalence for group-theoretical
  categories}, {C}omm. {A}lgebra 35 (2007), no. 11, 3544-3565.

\bibitem{nn}
{D}. {N}aidu and {D}. {N}ikshych, \emph{{L}agrangian subcategories and braided
  tensor equivalences of twisted quantum doubles of finite groups}, {C}omm.
  {M}ath. {P}hys. 279 (2008), 845-872.

\bibitem{segal1968}
Graeme Segal, \emph{Equivariant {$K$}-theory}, Inst. Hautes \'Etudes Sci. Publ.
  Math. (1968), no.~34, 129--151. \MR{0234452 (38 \#2769)}

\bibitem{td87}
Tammo tom Dieck, \emph{Transformation groups}, de Gruyter Studies in
  Mathematics, vol.~8, Walter de Gruyter \& Co., Berlin, 1987. \MR{889050
  (89c:57048)}

\bibitem{vafa}
Cumrun Vafa, \emph{Superstrings and topological strings at large {$N$}}, J.
  Math. Phys. \textbf{42} (2001), no.~7, 2798--2817, Strings, branes, and
  M-theory. \MR{1840317 (2002h:81227)}

\bibitem{wasserman1969}
Arthur~G. Wasserman, \emph{Equivariant differential topology}, Topology
  \textbf{8} (1969), 127--150. \MR{0250324 (40 \#3563)}

\bibitem{will}
Simon Willerton, \emph{The twisted {D}rinfeld double of a finite group via
  gerbes and finite groupoids}, Algebr. Geom. Topol. \textbf{8} (2008), no.~3,
  1419--1457. \MR{2443249 (2009g:57050)}

\bibitem{sw1}
{S}. {W}itherspoon, \emph{{T}he representation ring of the twisted quantum
  double of a finite group,}, {C}anad. {J}. {M}ath. 48 (1996), 1324-1338.

\bibitem{wit}
Edward Witten, \emph{D-branes and {$K$}-theory}, J. High Energy Phys. (1998),
  no.~12, Paper 19, 41 pp.\ (electronic). \MR{1674715 (2000e:81151)}

\end{thebibliography}
\end{document}